

\documentclass[a4paper, 12pt, headsepline]{scrartcl}
\setkomafont{disposition}{\normalfont\bfseries}



\pdfoutput=1

\usepackage[english, british]{babel}
\usepackage[utf8]{inputenc}
\usepackage[T1]{fontenc}
\usepackage{graphicx}
\usepackage{epstopdf}
\usepackage{pgfplots}

\usepackage{amsmath}
\usepackage{amsfonts}
\usepackage{amssymb}
\usepackage{amsthm}

\usepackage{caption}
\usepackage{subcaption}
\usepackage{pdfpages}
\usepackage[autostyle, english=british]{csquotes}

\usepackage[pdfborderstyle={/S/U/W 1}]{hyperref}
\hypersetup{pdfauthor={Caroline Lasser and David Sattlegger},pdftitle={Discretising the Herman--Kluk Propagator}}

\usepackage[
    backend=bibtex,
    bibencoding=ascii,
    style=alphabetic,
    natbib=true,
    url=false, 
    doi=true,
    eprint=true]{biblatex}
\addbibresource{./HK_bibliography.bib}
\AtEveryBibitem{\clearlist{language}} 
\AtEveryBibitem{\clearfield{note}} 
\AtEveryBibitem{\clearfield{issn}} 

\renewbibmacro{in:}{%
  \ifentrytype{article}{}{\printtext{\bibstring{in}\intitlepunct}}}


\def \bal#1\eal{\begin{align}#1\end{align}}
\def \bala#1\eala{\begin{align*}#1\end{align*}}
\def \expo#1{\exp \left( #1 \right)}

\providecommand{\pd}{\partial}
\def \d {\, \mathrm{d}}

\providecommand{\R}{\mathbb{R}}
\providecommand{\N}{\mathbb{N}}
\providecommand{\C}{\mathbb{C}}
\providecommand{\A}{\mathcal{A}}
\providecommand{\J}{\mathcal{J}}
\providecommand{\CC}{\mathcal{C}}

\def \L{{\mathrm{L}}}
\providecommand{\eps}{\varepsilon}
\providecommand{\pd}{\partial}
\def \i {\mathrm{i}}
\providecommand{\ii}{\mathrm{i}}
\def\Re{\mathrm{Re}}
\providecommand{\Heps}{\mathrm{H}^{\eps}}
\providecommand{\Uepst}{\mathrm{U}^{\eps}_t}
\providecommand{\Iepst}{\mathcal{I}^{\eps}_t}
\providecommand{\Ieps}{\mathcal{I}^{\eps}}

\providecommand{\LRd}{\L^2(\R^d)}
\providecommand{\D}{\mathcal{D}}
\providecommand{\W}{\mathcal{W}}

\providecommand{\SF}{\mathcal{S}}
\providecommand{\weyl}{\mathrm{op}^\eps}
\providecommand{\landauO}{\mathcal{O}} 
\providecommand{\Id}{\mathbf{I}}
\providecommand{\FT}{\mathcal{F}^\eps}
\providecommand{\e}{\mathrm{e}}
\providecommand{\supp}{\mathrm{supp}}
\providecommand{\Obs}{\mathcal{A}}
\providecommand{\E}{\mathbb{E}}
\providecommand{\Vb}{\mathbb{V}}
\def \geps #1 {g^{\eps}_{#1} }

\providecommand{\eg}{e.g.~}
\providecommand{\ie}{i.e.~}
\providecommand{\keywords}[1]{\textbf{Keywords}\; #1}
\providecommand{\subjclass}[1]{\textbf{2010 Mathematics Subject Classification}\; #1}

\def \norm#1{\left\| #1 \right\|}
\def \brac#1{\left( #1 \right)}
\def \abs#1{\left| #1 \right|}
\def \scal #1#2 { \left \langle #1 , #2 \right \rangle }

\theoremstyle{plain} 
\newtheorem{thm}{Theorem}
\newtheorem*{thm*}{Theorem}
\newtheorem{lemma}[thm]{Lemma} 
\newtheorem{proposition}[thm]{Proposition}

\theoremstyle{definition} 
\newtheorem{definition}{Definition}
\newtheorem*{definition*}{Definition}
\newtheorem{assumption}{Assumption}
\newtheorem{example}{Example}

\theoremstyle{remark} 
\newtheorem*{remark}{Remark}

\definecolor{myred}{rgb}{0.95,0,0}
\definecolor{mygreen}{rgb}{0,0.75,0}
\definecolor{myblue}{rgb}{0,0,0.85}



\pagestyle{headings}
\title{Discretising the Herman--Kluk Propagator}

\author{ Caroline Lasser${}^*$ \hspace{1em} and \hspace{1em} David Sattlegger${}^\star$}
\date{\footnotesize \today}

\date{\normalsize 1${}^{\textrm{st}}$ May 2016}


\begin{document}
\maketitle

\begin{center}
\vspace{-22pt}
	Technische Universit\"at M\"unchen \\
	\medskip
	${}^*\!$ {\footnotesize \phantom{g}\href{mailto:classer@ma.tum.de}{classer@ma.tum.de}\phantom{g}} \\
	${}^\star\!$ {\footnotesize \href{mailto:sattlegg@ma.tum.de}{david.sattlegger@tum.de}}
\vspace{22pt}
\end{center}

\noindent
\keywords{Herman--Kluk propagator, semi-classical approximation, mesh-less discretisation, symplectic methods}

\bigskip\noindent
\subjclass{81Q20, 65D30, 65Z05, 65P10}

\begin{abstract}
The Herman--Kluk propagator is a popular semi-classical approximation of the unitary evolution operator in quantum molecular dynamics. In this paper we formulate the Herman--Kluk propagator as a phase space integral and discretise it by Monte Carlo and quasi-Monte Carlo quadrature. Then, we investigate the accuracy of a symplectic time discretisation by combining backward error analysis with Fourier integral operator calculus. Numerical experiments for two- and six-dimensional model systems support our theoretical results. 
\end{abstract}


\section{Introduction}

Molecular quantum dynamics is an active area of research aiming at an improved understanding of fundamental chemical processes, \eg photoisomerisation or electrochemical reactions. Calculations are based on the semi-classical Schr\"odinger operator
\bal 
\Heps=-\frac{\eps^2}{2} \Delta + V. 
\eal
which results from the Born--Oppenheimer approximation. Its potential $V: \R^d \to \R$ is a smooth function of sub-quadratic growth. The small positive parameter $\eps>0$ reflects the mass ratio of electrons and nuclei in a molecule and typically ranges between $10^{-3}$ and $10^{-2}$. Since $\Heps$ is a self-adjoint linear operator on $\L^2(\R^d)$, the spectral theorem provides a well-defined unitary propagator
\bal \label{eq:Uepst}
\Uepst = \e^{-\i\Heps t / \eps}
\eal
for all times $t\in\R$. This gives us existence and uniqueness of the solution 
\bal
\psi(t,\cdot) = \Uepst \psi_0
\eal
to the time-dependent Schrödinger equation
\bal\label{eqn:Schrodinger}
\i \ \eps \ \pd_t \psi = \Heps \psi,\qquad \psi(0,\cdot) = \psi_0,
\eal
for all square integrable initial data $\psi_0 \in \L^2(\R^d)$. 
Typical solutions to the time-dependent Schr\"odinger equation \eqref{eqn:Schrodinger} are wave packets with width of order~$\sqrt{\eps}$, wavelength of order $\eps$, and an envelope moving at velocity of order one. For small $\eps$, grid-based numerical methods need a very fine resolution and thus become expensive even in one and computationally infeasible in higher dimensions. In this situation, semi-classical methods come into play. They use the underlying classical Hamiltonian system 
\bala 
\dot z = \J \nabla h(z)
\eala
which is characterized by a Hamiltonian function $h:\R^{2d}\to\R$ and the matrix 
\bal
\J = \begin{pmatrix}0 & \Id_d \\ -\Id_d & 0\end{pmatrix} \in \R^{2d \times 2d}. 
\eal
Such a system is numerically accessible even in high dimensions. In addition these methods work with ansatz functions that have the correct localisation both in space and frequency, \eg a Gaussian wave packet
\bala
\geps{z} : \R^{2d}\to\R,\quad x \mapsto (\pi\eps)^{-d/4} \exp\brac{-\tfrac{1}{2\eps}\, |x-q|^2 +\tfrac{\i}{\eps} \, p\cdot(x-q)}. 
\eala
It is parametrised by a phase space point $z=(q,p)\in\R^{2d}$. Gaussian wave packets enjoy the striking property that any square integrable function $\psi\in \L^2(\R^d)$ can be decomposed according to 
\bala
\psi = \brac{2\pi\eps}^{-d} \int_{\R^{2d}} \scal{\geps{z}}{\psi} g^\eps_z \d z. 
\eala
The precise meaning of the integral is given by the inversion formula of the Fourier--Bros--Iagolnitzer (FBI) transform. From this we get the formal equation 
\bala
\Uepst \psi_0 = \brac{2 \pi \eps}^{-d} \int_{\R^{2d}} \scal{\geps{z}}{\psi_0 } \left(\Uepst \geps{z} \right) \d z 
\eala
which motivates the approximation of $\Uepst$ by continuously superimposing approximately propagated Gaussian wave packets. In the chemical literature such methods are known as Initial Value Representations, see \cite{Thoss2004}. From a mathematical viewpoint they constitute Fourier integral operators with complex valued phase functions. A very simple approximation, 
$$
U_t^\eps \geps{z} \;\approx\; \e^{\frac\i\eps S(t,z) }\geps{\Phi^t(z)} , 
$$ 
 is called Frozen Gaussian and is due to Heller \cite{Heller1981}. It evolves the wave packet's centre according to the classical flow 
\bala
\Phi^t : \R^{2d} \to \R^{2d}, \quad z \mapsto \begin{pmatrix} X^t(z) \\ \Xi^t(z) \end{pmatrix}, 
\eala
which is defined as the solution to the ordinary differential equation $\dot z = \J\nabla h(z)$ with initial datum $z(0) = z$. The phase of the wave packet changes according to the action integral along the classical trajectory, \ie
\bal\label{eq:action}
S(t,z) := \int_0^t \left( \tfrac{\d}{\d\tau} X^\tau(z) \cdot \Xi^\tau(z ) - h(\Phi^\tau(z))\right) \d \tau. 
\eal
The approximation by Herman and Kluk \cite{Herman1984} is more sophisticated as it accounts for the changes in the width of a wave packet. It is defined as an operator  \bal \notag
\Iepst : \L^2(\R^d) & \to \L^2(\R^d), \\ 
\psi & \mapsto \brac{2\pi\eps}^{-d} \int_{\R^{2d}} u(t,z) \, \e^{\frac{\i}{\eps}S(t,z)} \scal{\geps{z}}{\psi} \geps{\Phi^t(z) } \d z \label{eq:HK} 
\eal
and is nowadays called Herman--Kluk propagator. In its centre it encapsulates the well-known Herman--Kluk prefactor
\bala
u(t,z) := \sqrt{2^{-d} \det\!\left(\pd_q X^t(z) + \pd_p\Xi^t(z) + \i(\pd_q\Xi^t(z)-\pd_p X^t(z))\right)}
\eala
which depends on the components of the Jacobian matrix of the flow. \citet[Theorem~2]{Swart2009} prove that $\Iepst$ is a bounded operator on $\L^2(\R^d)$ and that it approximates the unitary propagator \eqref{eq:Uepst} in the following sense. For every $T>0$, there exists $C \ge 0$ such that for all $\eps>0$ 
\bal\label{eq:HK_accuracy}
\sup_{t \in [0,T]} \norm{ \Iepst - \Uepst} \leq C \, \eps.
\eal
The constant $C\ge0$ depends on higher order derivatives of the Hamiltonian function $h$ and the flow map $\Phi^t$. It vanishes, if the potential $V$ is a polynomial of degree $\le 2$, so that $\Iepst = U_t^\eps$ for all harmonic systems. 

The discretisation of the Herman--Kluk propagator involves two separate tasks, the phase space discretisation of the integral over $\R^{2d}$ and the time discretisation of the Hamiltonian flow together with the action and its Jacobian matrix. We present our approach to the phase space problem in \S\ref{section:PhaseSpaceDiscretization}. There, we assume that the initial data are Schwartz functions $\psi_0 \in \SF(\R^d)$ that allow for a multiplicative decomposition 
\bala
(2\pi\eps)^{-d} \scal{g^\eps_z}{\psi_0} =: r_0^\eps(z) \cdot \mu_0^\eps(z), 
\eala
for all $z \in \R^{2d}$, where $\mu_0^\eps\in\CC^\infty(\R^{2d})$ is a smooth probability distribution on $\R^{2d}$ and $r_0^\eps\in\CC^\infty(\R^{2d})\cap L^1(\d\mu_0)$ grows at most polynomially for $z \to \infty$. The Herman--Kluk propagator is thus rewritten as
\bala
\Iepst \psi_0 = \int_{\R^{2d}} r_0^\eps(z) \, u(t, z) \, \e^{\frac \i \eps S(t, z)} \, \geps{\Phi^t(z)} \d \mu^\eps_0(z).
\eala
We use Monte Carlo or quasi-Monte Carlo quadrature to discretise this integral by sampling $z_1, \ldots, z_M \in \R^{2d}$ from the probability 
distribution~$\mu_0$ and defining 
\bala
\psi_M^\eps(t) = \frac 1 M \sum_{m=1}^M r_0^\eps(z_m) \, u(t, z_m) \, \e^{\frac \i \eps S(t, z_m)} \, \geps{\Phi^t(z_m)} 
\eala
as a linear combination of Gaussian wave packets with classically propagated centres. We can prove that 
\bala
\psi_M^\eps(t) \to \Iepst\psi_0 \quad\text{as}\quad M\to\infty,
\eala
where the precise meaning of the limit and convergence rates will be addressed in \S~\ref{section:DiscretizingPhaseSpace}. For the time discretisation, which is presented in detail in \S\ref{sec:TimeDiscretisation}, we choose an initial phase space point $z\in\R^{2d}$ and set up a system of ordinary differential equations for 
\bala
\left(\Phi^t(z),(D\Phi^t)(z),S(t,z)\right)\in\R^{2d}\times\R^{2d\times 2d}\times\R.
\eala
We integrate the equations by a method of order $\gamma$ with fixed time step $\tau>0$ in such a way that we obtain a symplectic approximation $\tilde\Phi^\tau$ for the Hamiltonian flow $\Phi^t$. Denoting the corresponding approximate action and Herman--Kluk prefactor by $\tilde S$ and $\tilde u$ respectively we then define the approximate propagator $ \tilde{\mathcal{I}}^\eps_\tau : \L^2(\R^d)\to \L^2(\R^d)$ by 
\bala
\tilde{\mathcal{I}}^\eps_\tau \psi := (2\pi\eps)^{-d} \int_{\R^{2d}} \tilde u(\tau,z) \e^{\frac{\i}{\eps}\tilde S(\tau,z)} \langle g^\eps_z,\psi\rangle g^\eps_{\tilde\Phi^\tau(z)} \d z.
\eala
Our main theoretical result, Theorem~\ref{Thm:ConvergenceOfMethod}, establishes that the Herman--Kluk propagator and its time discrete counterpart are close to each other in the following sense. There exists a constant $C>0$ such that for all $\eps>0$ and $\tau>0$ with $\tau^{\gamma+1}<\eps$
\bala
\norm{\tilde{\mathcal{I}}^\eps_\tau - \mathcal I^\eps_\tau } \le C \tau^{\gamma+1}/\eps. 
\eala
For the proof we use the well-established backward error analysis of geometric numerical integration by \citet[Ch. IX]{Hairer2006} in combination with 
the Fourier integral operator calculus developed by \citet{Swart2009}. Our numerical experiments 
confirm the theoretical error estimate, of course, and demonstrate the practicability of the proposed discretisation in a moderately high-dimensional setting. All our simulations achieve an accuracy at the level of the asymptotic $O(\eps)$ resolution provided by \eqref{eq:HK_accuracy}.

The paper is organised as follows. The next section briefly reviews some numerical methods for the semi-classical Schr\"odinger equation. Then, we discuss the Herman--Kluk propagator and its properties in \S\ref{section:HK}. The algorithmic description of our discretisation is given in \S\ref{section:Algorithm}, while our main results, the convergence analysis of the phase space and time discretisation are presented in \S\ref{section:ApproximationProperties}. The numerical experiments in \S\ref{section:Experiments} comprise a two-dimensional torsional system as well as a Henon--Heiles system in dimension $d=6$. The appendices summarise computational details for the Herman--Kluk prefactor and expectation values. 


\section{Semi-classical approximations}\label{sec:semiclassics} 

The high frequencies of the solution to the semi-classical Schr\"odinger equation~\eqref{eqn:Schrodinger} 
exclude conventional grid based space discretisations schemes, in particular in view of the size of the 
dimension $d\gg1$ for molecular systems of interest. Quasi- and semi-classical approximations 
come into play here, using a priori analytical knowledge of the solution's qualitative behaviour. 
We will shortly review some of them. 


\subsection{Gaussian wave packets}

Gaussian wave packets $g^\eps_z$ are a major building block of the Herman--Kluk propagator~\eqref{eq:HK}. They are characterised by their respective centre point $z=(q,p)\in\R^{2d}$ in phase space. Their widths are frozen to be unit. Introducing a complex symmetric matrix $C=C^T\in\C^{d\times d}$ with positive definite imaginary part and a complex number $\xi\in\C$ to the parameter space, one defines a general Gaussian wave packet by
\bala
\varphi_0^\eps[z,C,\xi](x) := (\pi \eps)^{-d/4} \exp\brac{\tfrac \i{2 \eps} (x-q)\cdot C(x-q) + \tfrac \i \eps \, p\cdot (x-q) + \tfrac\i\eps \, \xi}. 
\eala
Note that this definitions contains the simple Gaussian wave packet 
\bala
g^\eps_z = \varphi_0^\eps[z,\i\,\Id_d,0].
\eala
as a special case. If the phase and normalisation parameter $\xi$ is properly chosen with respect to the width matrix $C$, then
\bala
\norm{\varphi_0^\eps[z,C,\xi]}^2 = \int_{\R^d} |\varphi^\eps_0[z,C,\xi](x)|^2 \d x = 1.
\eala 
For the unitary propagation of a general Gaussian wave packet, one supplements the Hamiltonian system $\dot z(t) = \J\nabla h(z(t))$ for the centre motion by a Riccati equation for the complex width matrix $C(t)$ and an ordinary differential equation for $\xi(t)$ ensuring the correct phase and normalisation. Then, for every $T>0$, there is a constant $c\ge 0$ such that for all $\eps>0$
\bal 
\sup_{t\in[0,T]} \norm{\varphi_0^\eps[z_t,C_t,\xi_t] - \Uepst\varphi^\eps_0[z_0,C_0,\xi_0] } \le c \sqrt{\eps}.
\eal
Moreover, if the potential $V$ is a polynomial of degree $\le 2$, then $c=0$, and the Gaussian wave packet approximation is exact. Over decades, general Gaussian wave packets have been used as a flexible tool in chemical physics, cf.~Heller \cite{Heller1976} or Littlejohn \cite{Littlejohn1986}. More recently, they have also been considered for the systematic construction of numerical integrators by Faou and Lubich \cite{Faou2006}. 


\subsection{Hagedorn's semi-classical wave packets} \label{section:Hagedorn} 

Any complex symmetric matrix $C=C^T\in\C^{d\times d}$ with positive definite imaginary part can be written as $C=PQ^{-1}$, where $P,Q\in\C^{d\times d}$ are invertible and satisfy
\bal \label{eqn:symprel}
Q^T P - P^T Q = 0, \qquad Q^* P - P^* Q = 2 \i\, \Id_d. 
\eal
We use such matrices $P$ and $Q$ to build the rectangular matrix
$$
Z = \begin{pmatrix}Q\\ P\end{pmatrix}\in\C^{2d\times d}.
$$ 
Then, we define the general Gaussian wave packet
\bala
\lefteqn{\varphi^\eps_0[z,Z](x) =}\\
&& (\pi \eps)^{-d/4} \det(Q)^{-1/2} \exp\left(\tfrac \i{2 \eps} (x-q)\cdot PQ^{-1}(x-q) + \tfrac \i \eps p\cdot (x-q)\right)
\eala
in the parametrisation introduced by Hagedorn \cite{Hagedorn:SemclassicalQM:1980,Hagedorn1998}. The matrix conditions~\eqref{eqn:symprel} ensure the correct normalisation, 
$$
\norm{\varphi^\eps_0[z,Z]} = 1.
$$
Hagedorn's parametrisation allows an elegant construction of an orthonormal basis of $L^2(\R^d)$ 
$$
\varphi_k^\eps[z,Z] = \frac{1}{\sqrt{k!}}\A^\dagger[z,Z]^k\varphi_0^\eps[z,Z],\qquad k\in\N^d,
$$
by the iterated application of the raising operator
$$
\A^\dagger[z,Z] = \frac{\i}{\sqrt{2\eps}} \,Z^* \J (\hat z - z), \text{ where } (\hat z\psi)(x) := \begin{pmatrix}x\psi(x)\\ -\i\eps\nabla\psi(x)\end{pmatrix}.
$$
For the unitary propagation of these semi-classical wave packets one augments the Hamiltonian equation $\dot z_t = \J\nabla h(z_t)$ by a rectangular version of its variational equation
$$
\dot Z_t = \J \nabla^2h(z_t) Z_t
$$
and the action integral \eqref{eq:action} to generalise the previously discussed Gaussian wave packet approximation as shown by Hagedorn \cite[Theorem~2.9]{Hagedorn1998}. For all $k\in\N^d$ and $T>0$ there exists $c\ge0$ such that for all $\eps>0$
\bala
\sup_{t\in[0,T]}\norm{\Uepst \varphi_k^\eps [z_0,Z_0] - \e^{\frac \i \eps S(t,z_0)} \varphi_k^\eps [z_t, Z_t] } \leq c \sqrt{\eps}.
\eala
Again, if the potential $V$ is a polynomial of degree $\le 2$, then $c=0$. Using this exact propagation property for harmonic Hamiltonians, \citet{Faou2009} as well as  \citet{Gradinaru2014} develop a Galerkin method with time-splitting for a convergent discretisation of the unitary time evolution of Hagedorn's semi-classical wave packets. 


\subsection{Gaussian beams}

A complementary line of semi-classical approximations is built for initial data that are less localised in position space than semi-classical wave packets. Wentzel--Kramers--Brillouin (WKB) wave functions 
\bala
w^\eps_0(x) = \alpha_0(x) \e^{\frac\i\eps \sigma_0(x)},\qquad x\in\R^d,
\eala
are defined by a complex-valued amplitude function $\alpha_0\in C^\infty(\R^d)$ and a real-valued phase function $\sigma_0\in C^\infty(\R^d)$. A first order Gaussian beam approximation of the unitary Schr\"odinger dynamics carries WKB initial data beyond caustics by continuously superimposing general Gaussian wave packets according to
\bal\label{eq:beam}
b^\eps_t= (2\pi\eps)^{-d/2} \int_{\Lambda_0} \alpha_t(z)\, \e^{\frac\i\eps \sigma_t(z)} \,\varphi^\eps_0[\Phi^t(z),Z_t(z)] \d z.
\eal
The centres of the initial Gaussians are chosen from the set 
\bala
\Lambda_0 := \left\{(x,\nabla \sigma_0(x))\mid x\in\supp(a_0)\right\},
\eala
while the propagation of the beam parameters $\alpha_t(z)\in\C$, $\sigma_t(z)\in\R$, and $Z_t(z)\in\C^{2d\times d}$ is achieved by a system of coupled ordinary differential equations driven by the classical Hamiltonian flow $\Phi^t:\R^{2d}\to\R^{2d}$. Its building blocks resemble the variational equation and the equation for the action integral. Zhen \cite[Theorem~5.1]{Zheng2014} proves that for all $T>0$ there exists a constant $c\ge0$ such that for all $\eps>0$
$$
\sup_{t\in[0,T]} \norm{\Uepst w^\eps_0 - b^\eps_t} \le c \,\eps.
$$
Higher order Gaussian beam approximations with $\landauO(\eps^{N/2})$ accuracy, $N\in\N$, have been developed as well \cite{Runborg2013}. The discretisation of the continuous Gaussian beam superposition \eqref{eq:beam} and its higher order versions has been tackled by grid based numerical quadrature. Thus, numerical applications have been restricted to systems in dimension $d=1$ and $d=2$.


\subsection{Quasi-classical approximations}\label{sec:quasi}

It is often not the time-evolved wave function $\psi(t,\cdot)=\Uepst\psi_0$ which is of interest, but derived quadratic quantities as expectation values
$$
E_\Obs(t) = \langle \psi(t),\Obs \psi(t)\rangle
$$
for a given self-adjoint operator $\Obs$ defined on $L^2(\R^d)$. Typical observables are $\eps$-scaled pseudo-differential operators and can be expressed as the Weyl quantisation $\Obs = \weyl(a)$ of a smooth phase space function $a:\R^{2d}\to\R$. Consider for example 
$$
\psi\mapsto-\frac{\eps^2}{2}\Delta\psi\quad\text{and}\quad \psi\mapsto V\psi,
$$ 
\ie the kinetic and potential energy operators respectively. The most popular quasi-classical approximation \cite{Miller1974,Thoss2004,Lasser2010} uses the Wigner function $\W(\psi_0)\in L^2(\R^{2d})$ of the initial wave function and the classical Hamiltonian flow $\Phi^t:\R^{2d}\to\R^{2d}$ to compute the weighted phase space integral
$$
E_{\mathrm{appr}}(t) = \int_{\R^{2d}} (a \circ \Phi^t)(z) \, \W(\psi_0)(z) \, \d z.
$$
This is commonly called linearised semi-classical initial value representation (LSC-IVR) in chemistry journals. Its accuracy is of order two in $\eps$, meaning that for all $T>0$ there exists a constant $c\ge0$ such that for all $\eps>0$
\bal 
\sup_{t\in[0,T]} \left| E_\Obs(t) - E_{\mathrm{appr}}(t)\right| \le c\,\eps^2.
\eal
The constant $c$ depends on the observable $\Obs$ and derivatives of the flow $\Phi^t$, but is uniformly bounded for all normalised initial data with $\norm{\psi_0}=1$. 
As for the Hagedorn wave packets and the Herman--Kluk propagator, the time evolution for quadratic Hamiltonians is exact so that $c=0$ in this case. In \S\ref{section:ExamplesExpectationValues} we shall use this quasi-classical approximation to calculate reference expectation values for our numerical experiments in $d=6$.


\section{The Herman--Kluk propagator} \label{section:HK}

In \cite{Herman1984} Herman and Kluk observed that in most cases a single Gaussian wave packet cannot accurately approximate a quantum system. However, a suitable superposition of Gaussian wave packets can. The authors provide a formal justification and derive what we now call the Herman--Kluk propagator. The rigorous mathematical analysis of this method is due to \cite{Swart2009}. It crucially uses the following generalised Fourier transform.
\begin{definition} \label{def:FBI}
For $z=(q,p)\in\R^{2d}$ we set
\bal \label{eqn:CoherentState}
\geps{z} (x) = (\pi \eps)^{- \frac d 4} \exp\left( -\frac{1}{2 \eps} \abs{x-q}^2 + \frac{\i}{\eps} \ p \cdot \brac{x-q} \right),\qquad x\in\R^d.
\eal
The mapping $T^\eps: \SF(\R^d) \longrightarrow \SF(\R^{2d})$ defined by 
\bala
& (T^\eps \psi)(z) := (2 \pi \eps)^{- d / 2} \scal{\geps{z}}{\psi}
\eala
is called the Fourier--Bros--Iagolnitzer (in short: FBI) transform. 
\end{definition}

One can show that the FBI transform can be extended to map $\LRd$ isometrically into $\L^2(\R^{2d})$ and that for all $\psi \in \LRd$ the inversion formula
\bal \label{eqn:FBIInversion}
\psi = \brac{2 \pi \eps}^{-d} \int_{\R^{2d}} g^{\eps}_{z} \, \scal{\geps{z}}{\psi } \d z
\eal
holds, see \cite[Chapter 3.1]{Martinez2002}. From this we get the formal equation 
\bala
\Uepst \psi_0 = \brac{2 \pi \eps}^{-d} \int_{\R^{2d}} \left(\Uepst \geps{z} \right) \, \scal{\geps{z}}{\psi_0 } \d z 
\eala
which is used as a starting point for the Herman--Kluk approximation. 


\subsection{Definition and well-posedness}
\begin{definition}
For any initial wave function $\psi_0 \in \LRd$ and time $t \in [0, T]$ the \textit{Herman--Kluk propagator} is defined by
\bal \label{eqn:HK}
\Iepst \psi_0 = \brac{2 \pi \eps}^{-d} \int_{\R^{2d}} u(t, z) \, \e^{\frac \i \eps S(t, z)} \geps{\Phi^t(z)} \, \scal{\geps{z}}{\psi_0} \d z . 
\eal 
Again, $\Phi^t=(X^t,\Xi^t)$ denotes the classical Hamiltonian flow and $S$ the 
corresponding action 
\bala
S(t, \cdot ) = \int_0^t \left( \tfrac{\d}{\d\tau} X^\tau \cdot \Xi^\tau - h(\Phi^\tau)\right) \d \tau. 
\eala
The quantity $u(t, z) $ is called \textit{Herman--Kluk prefactor}. It incorporates the components of the Jacobian matrix of the flow
\bala
(D\Phi^t)(z) = \begin{pmatrix}\pd_q X^t(z) & \pd_p X^t(z)\\ \pd_q \Xi^t(z) & \pd_p \Xi^t(z)\end{pmatrix}\in\R^{2d\times 2d}
\eala 
and is defined by 
\bal \label{eqn:HKFactor}
u(t, z) := \sqrt{2^{- d} \det \brac{ \pd_q X^t(z) - \i \pd_p X^t(z) + \i \pd_q \Xi^t(z) + \pd_p \Xi^t(z) } }
\eal
 for all $z = (q,p) \in \R^{2d}$. 
\end{definition} 

\begin{remark}
Note that for $t=0$ the Herman--Kluk propagator reduces to the FBI inversion formula \eqref{eqn:FBIInversion}, that is, 
\bala
\Ieps_0\psi_0 = \psi_0,\qquad \psi_0\in \LRd,
\eala
since $\Phi^0(z) = z$, $u(0,z) = 1$, and $S(0,z) = 0$. 
\end{remark}
\citet{Swart2009} introduce a general class of Fourier integral operators to which the Herman--Kluk propagator $\Iepst$ belongs and prove that it is possible to construct approximate propagators of arbitrary order in $\eps$. The following theorem is a special case of their main result. 

\begin{thm}[{Swart and Rousse \cite[Theorem 2]{Swart2009}}] \label{Thm:TheoreticBound}
Let $\Uepst$ be the unitary time evolution of \eqref{eqn:Schrodinger} with subquadratic potential $V$. The Herman--Kluk propagator $\Iepst$ satisfies 
\bala
\sup_{t \in [0,T]} \left \| \Uepst- \mathcal{I}^{\eps}_t \right\|_{\LRd \rightarrow \LRd} \leq C(T) \ \eps, 
\eala
where $T>0$ is a fixed time and $C(T)>0$ is independent of $\eps$. 
\end{thm}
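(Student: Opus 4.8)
The plan is to compare $\Iepst$ with $\Uepst$ through Duhamel's principle, exploiting that both agree with the identity at $t=0$ and that $\Uepst$ is unitary. Differentiating $s\mapsto \mathrm{U}^\eps_{t-s}\Ieps_s$ and using that $\mathrm{U}^\eps_{t-s}$ solves the Schr\"odinger equation while commuting with $\Heps$, I would integrate over $[0,t]$ to obtain
\bala
\Iepst - \Uepst = -\frac{\i}{\eps}\int_0^t \mathrm{U}^\eps_{t-s}\,\mathcal{R}^\eps_s \d s, \qquad \mathcal{R}^\eps_s := \brac{\i\eps\pd_s - \Heps}\Ieps_s.
\eala
Since $\mathrm{U}^\eps_{t-s}$ is unitary, this reduces the theorem to a single estimate on the residual, namely $\sup_{s\in[0,T]}\norm{\mathcal{R}^\eps_s}\le C\eps^2$: dividing by $\eps$ and integrating over $[0,T]$ then yields the claimed $\landauO(\eps)$ bound. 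Everything hinges on showing that the Herman--Kluk ansatz solves the Schr\"odinger equation up to an operator residual of order $\eps^2$.

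To compute $\mathcal{R}^\eps_s$ I would differentiate under the phase-space integral \eqref{eqn:HK} and let $\Heps=-\tfrac{\eps^2}{2}\Delta+V$ act in the $x$-variable on the propagated Gaussian $\geps{\Phi^s(z)}$. The time derivative produces contributions from $\pd_s u$, from $\tfrac{\i}{\eps}(\pd_s S)$, and from $\pd_s\geps{\Phi^s(z)}$, while the potential is Taylor-expanded about the current centre $X^s(z)$; on $\geps{\Phi^s(z)}$ each factor $(x-X^s(z))$ carries a power of $\sqrt{\eps}$, so this is an expansion in $\eps$. The entire point of the Herman--Kluk construction is that Hamilton's equations $\dot X^s=\nabla_p h$, $\dot\Xi^s=-\nabla_q h$, the action ODE for $S$, and the transport relation encoded in the prefactor $u$ are exactly what makes the $\landauO(1)$ and $\landauO(\eps)$ terms cancel: the constant and linear Taylor terms of $V$ are absorbed into the centre motion and the action, and the quadratic term is matched by the variational (Jacobian) dynamics inside $u$. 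What survives is governed by the cubic and higher Taylor remainder of $V$, which is why the error constant vanishes identically for potentials of degree $\le 2$.

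The delicate point is the \emph{size} of $\mathcal{R}^\eps_s$ in operator norm. Packet by packet, the surviving cubic term $\nabla^3 V(X^s(z))\cdot(x-X^s(z))^{\otimes 3}\,\geps{\Phi^s(z)}$ has $\L^2$ norm only of order $\eps^{3/2}$, which through the Duhamel estimate would give the weaker accuracy $\landauO(\sqrt\eps)$ typical of a single Gaussian wave packet. The improvement to $\landauO(\eps)$ rests on the fact that this cubic term is \emph{odd} in $(x-X^s(z))$, so that after the oscillatory momentum integration against $\e^{\frac{\i}{\eps}\Xi^s(z)\cdot(x-X^s(z))}$ in \eqref{eqn:HK} an extra factor of $\sqrt\eps$ is gained, upgrading the residual to genuine order $\eps^2$ at the level of the operator. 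Capturing this cancellation is precisely what forces one to work with the Fourier integral operator calculus of \cite{Swart2009} rather than a packet-by-packet triangle inequality.

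Finally, to turn these order counts into operator-norm bounds I would invoke the $\L^2$-boundedness of the class of complex-phase Fourier integral operators to which both $\Iepst$ and $\mathcal{R}^\eps_s$ belong. Concretely, one writes the Schwartz kernel, whose elementary building block is the Gaussian overlap $\scal{\geps{w}}{\geps{z}}$ decaying like $\exp(-\abs{w-z}^2/(4\eps))$, and applies a Cotlar--Stein or Schur test, tracking $\eps$-powers so that the polynomially growing amplitudes — built from derivatives of $V$ and of the flow $\Phi^s$, controlled on $[0,T]$ by Gronwall estimates under the subquadratic hypothesis — are dominated by the overlap decay uniformly in $\eps$ and $s$. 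The main obstacle is exactly this combination: one must simultaneously control the complex Gaussian phase (including a continuous branch of the square root defining $u$, a Maslov-type consistency condition), the polynomial amplitude growth over the non-compact phase space $\R^{2d}$, and the extra $\sqrt\eps$ cancellation, all uniformly in $\eps$. Once $\sup_{s\in[0,T]}\norm{\mathcal{R}^\eps_s}\le C\eps^2$ is secured, the Duhamel identity and the unitarity of $\mathrm{U}^\eps_{t-s}$ close the argument, and the remaining Gaussian integrals are routine.
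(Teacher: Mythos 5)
You should first note what ``the paper's proof'' of this statement is: there is none. Theorem~\ref{Thm:TheoreticBound} is quoted from \citet[Theorem~2]{Swart2009}; the paper only imports two pieces of that Fourier integral operator calculus later, namely the boundedness estimate \eqref{eq:linf} and the absorption identity \eqref{eq:absorb}, in order to prove its own Theorem~\ref{Thm:ConvergenceOfMethod}. Measured against the cited proof, your overall architecture is the right one: the Duhamel identity $\Iepst-\Uepst = -\tfrac{\i}{\eps}\int_0^t \mathrm{U}^\eps_{t-s}\mathcal{R}^\eps_s\,\mathrm ds$ with $\mathcal{R}^\eps_s=(\i\eps\pd_s-\Heps)\Ieps_s$, the reduction to $\sup_s\|\mathcal{R}^\eps_s\|\le C\eps^2$, the cancellation of the $\landauO(1)$ and $\landauO(\eps)$ terms by Hamilton's equations, the action ODE and the prefactor, the exactness for quadratic $V$, and the $\L^2$-boundedness of the complex-phase FIO class are all genuinely the skeleton of Swart--Rousse's argument, and your final order counting (residual $\eps^2$, hence error $\eps$) is correct.

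The genuine gap is the mechanism you invoke for the one step that carries all the weight: the upgrade from the packet-wise $\eps^{3/2}$ bound to an $\eps^2$ operator bound. You attribute it to the \emph{oddness} of the cubic Taylor term in $x-X^s(z)$ together with the oscillatory momentum integration. That parity argument does not survive being made precise: the phase-space variable enters through the nonlinear maps $z\mapsto X^s(z),\Xi^s(z)$, so there is no reflection symmetry under which the integrand of \eqref{eqn:HK} is odd, and the operator norm of a Fourier integral operator is not computed by diagonal Gaussian moments (the parity heuristic you describe is the one behind the $\landauO(\eps^2)$ accuracy of the \emph{quasi-classical} Egorov-type approximation of \S\ref{sec:quasi}, a different result). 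The actual tool in \cite{Swart2009} is the absorption identity \eqref{eq:absorb}: integration by parts in $z$ trades each factor $\tfrac1\eps\bigl(x_j-X^s_j(z)\bigr)$ in the symbol for the bounded divergence term $-\mathrm{div}_z\bigl(e_j\cdot\mathcal Z_s^{-1}(z)u\bigr)$, with no parity needed; iterating it gives that the cubic remainder is an operator of norm $\landauO(\eps^2)$, while for the quadratic Taylor term absorption leaves behind an $\landauO(\eps)$ piece which is \emph{exactly} what the Herman--Kluk prefactor ODE is designed to cancel. In other words, even your statement that ``the quadratic term is matched by the variational dynamics inside $u$'' can only be formulated and proved at the operator level through this lemma. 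Without \eqref{eq:absorb} (or an equivalent phase-space integration by parts), the central estimate $\sup_s\|\mathcal{R}^\eps_s\|\le C\eps^2$ in your sketch is unsubstantiated, and with it, the oddness consideration is unnecessary.
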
 
In light of this approximation estimate we desire a numerically stable Herman--Kluk algorithm. This is the main contribution of our paper. In the process of proving our main result in \S\ref{section:ApproximationProperties} we shall also use elements of the Fourier integral operator calculus that has been developed in \cite{Swart2009} for establishing Theorem~\ref{Thm:TheoreticBound}.

\begin{remark}
As an intermediate result of the original proof in \cite{Swart2009}, one obtains that for any time $t\in[0,T]$ the prefactor $z\mapsto u(t,z)$ is a smooth function such that the function itself and all its derivatives are bounded. Moreover, one also discovers that the Herman--Kluk propagator is exact for quadratic Hamiltonians. 
\end{remark}


\subsection{The Herman--Kluk propagator in momentum space} 
Many situations require knowledge of the Fourier transform of a wave function, \eg when calculating the expectation values for the momentum operator $\psi\mapsto\-\i\eps\nabla\psi$ or the kinetic energy operator $\psi\mapsto-\frac{\eps^2}{2}\Delta\psi$. Since in general we will not have the Herman--Kluk wave function on a uniform grid, using the FFT might prove difficult. There is, however, a way to calculate the Herman--Kluk propagator and its Fourier transform simultaneously by considering the following formal argument. For all $\xi\in\R^d$ let
\bala
\brac{\FT\psi}(\xi) := (2\pi\eps)^{-d/2} \int_{\R^d} \e^{-\frac\i\eps x\cdot\xi}\,\psi(x) \d x
\eala
be the $\eps$-scaled Fourier transform. Then, 
\bala
\FT\brac{\Iepst \psi_0} &= \brac{2 \pi \eps}^{-d} \ \FT \brac{ \int_{\R^{2d}}u(t,z) \, \e^{\frac \i \eps S(t,z)} \geps{\Phi^t(z)} \scal{\geps{z}}{\psi_0} \d z } \\ 
 &= \brac{2 \pi \eps}^{-d} \int_{\R^{2d}} u(t,z) \, \e^{\frac \i \eps S(t, z)} \brac{\FT \geps{\Phi^t(z)} } \, \scal{\geps{z}}{\psi_0} \d z . 
\eala
Once one manages to calculate the Herman--Kluk propagator, it is sufficient to know the Fourier transform of a Gaussian wave packet, \ie
$$
\FT \geps{(q,p)} = \e^{-\frac\i\eps p\cdot q}\, g^\eps_{(p,-q)}, 
$$ 
to calculate its Fourier transform. This can be done in parallel without substantial additional effort. 


\section{The algorithm} \label{section:Algorithm}

As the first step for deriving the algorithm that we propose, let us take another look at the definition of the Herman--Kluk propagator \eqref{eqn:HK}. Its evaluation requires involves one integral over the phase space $\R^{2d}$ and another one over $\R^d$ for each phase space point in order to calculate the FBI transform. In either case the respective integrand is potentially highly oscillatory. Furthermore, we need to calculate the classical flow $\Phi^t(z)$, the classical action $S(t,z)$, and the Herman--Kluk prefactor $u(t,z)$ for all phase space points $z\in\R^{2d}$. The present chapter describes a way to do this while circumventing any difficulties that occur along the way. 


\subsection{Phase space discretisation} \label{section:PhaseSpaceDiscretization}
In order not to having to evaluate the integral for the FBI transform by numerical quadrature, we restrict ourselves to specific initial wave functions. Their FBI transform should be computable analytically and they should satisfy the following assumption.

\begin{assumption} \label{AssumptionMuR}
Let $\psi_0 \in \SF(\R^d)$ such that for all $z \in \R^{2d}$ there is a multiplicative decomposition 
\bal \label{eq:alpha}
(2\pi\eps)^{-d} \scal{g^\eps_z}{\psi_0} =: r^\eps_0(z) \cdot \mu^\eps_0(z), 
\eal
with $\mu^\eps_0\in \SF(\R^{2d})$ being a probability distribution on $\R^{2d}$ and the complex-valued function $r^\eps_0\in C^\infty(\R^{2d})\cap L^1(\d\mu^\eps_0)$ growing at most polynomially for $z \to \infty$. 
\end{assumption}
A variety of initial wave functions that are commonly used in semi-classical calculations satisfy this assumption, including Hermite functions as well as Hagedorn wave packets. 
\begin{example}[label=FBIofGaussian]\label{Ex:PhaseSpaceDisc}
A common choice as initial wave function is a simple Gaussian wave packet $\psi_0=g^\eps_{z_0}$ centred at some point $z_0 = (q_0, p_0) \in \R^{2d}$. In this case, the scalar product that occurs in the FBI transform gives 
\bal \label{eqn:FBIofGauss}
 \scal{\geps{z}}{\psi_0} = \exp\brac{-\tfrac{1}{4 \eps} \, \abs{z - z_0}^2 + \tfrac{\i}{2 \eps} \, \brac{p + p_0} \cdot \brac{q-q_0} }. 
\eal
Hence, we get 
\bala
\mu_0^\eps(z) = (4 \pi \eps)^{-d} \e^{-\frac{1}{4 \eps} \abs{z - z_0}^2}\quad\text{and}\quad
r_0^\eps(z) = 2^{d} \e^{\tfrac \i {2 \eps} \brac{p + p_0} \cdot \brac{q - q_0} }.
\eala
as a multiplicative decomposition thereof. For the corresponding explicit formulae for Hermite and Hagedorn functions see \cite{Lasser2014}. 
\end{example}
Assumption~\ref{AssumptionMuR} allows the interpretation of the Herman--Kluk propagator as an integration over phase space weighted 
with respect to the probability measure $\mu_0$, 
\bal \label{eqn:WeightedIntegration}
\Iepst \psi_0 = \int_{\R^{2d}} r^\eps_0(z) \, u(t, z) \, \e^{\frac \i \eps S(t, z)} \, \geps{\Phi^t(z)} \d \mu^\eps_0(z). 
\eal
For one-dimensional problems we could consider grid based quadrature methods for the $\mu_0^\eps$-integration. However, already for two-dimensional systems phase space is four-dimensional, and conventional grid based approaches are no longer practical. We therefore turn to grid free methods, in particular Monte Carlo and quasi-Monte Carlo quadrature, which permit the evaluation of high dimensional integrals. In addition, their shortcoming of having a low order of accuracy is of little consequence since the total error is already dominated by the asymptotic error of order $\eps$ as shown in Theorem~\ref{Thm:TheoreticBound}. We use either Monte Carlo or quasi-Monte Carlo quadrature to approximate \eqref{eqn:WeightedIntegration}. In both cases we define an approximate wave function by 
\bal \label{eqn:ApproxWaveFunction}
\psi_M(t) := \frac 1 M \sum_{m=1}^M r_0^\eps(z_m) \, u(t, z_m) \, \e^{\frac \i \eps S(t, z_m)} \, \geps{\Phi^t(z_m)} 
\eal
where $z_1, \ldots, z_M \in \R^{2d}$ are sampled from $\mu_0^\eps$. In \S\ref{section:DiscretizingPhaseSpace} we will present rigorous error estimates for these discretisations.


\subsection{Calculation of expectation values} \label{section:ExpectationValues}
One of the  Herman--Kluk propagator's advantages is the ability to compute the full wave function including its phase. In addition, we also want to be able to calculate expectation values for observables. This is important for practical purposes as well as comparability to reference solutions. A quantum mechanical observable is a self-adjoint operator $\Obs$ on $\L^2(\R^d)$, \eg the position or momentum operator. Its expectation value with respect to a normalised state $\psi \in \L^2(\R^d)$ is given by the inner product $\scal{\psi}{\Obs \, \psi} $. In order to calculate such quantities we would have to perform yet another numerical quadrature with an highly oscillatory integrand. However, there is a way to compute expectation values without actually evaluating the full Herman--Kluk wave function. By using the abbreviation $f_t^\eps(z) := r_0(z) \, u(t, z) \, \e^{\frac \i \eps S(t, z)}$ we write
\bal
\scal{\Iepst \psi_0}{\Obs \, \Iepst \psi_0} &= \int_{\R^{2d}} \int_{\R^{2d}} \overline{f_t^\eps(w)} \, f_t^\eps(z) \, \scal{\geps{\Phi^t(w)}}{\Obs \, \geps{\Phi^t(z)} } \d \mu_0^\eps(w) \d \mu_0^\eps(z) \nonumber \\ \label{eqn:ApproxExpectationValues}
& = \int_{\R^{4d}} \overline{f_t^\eps(w)} \, f_t^\eps(z) \, \scal{\geps{\Phi^t(w)}}{\Obs \, \geps{\Phi^t(z)} } \d \brac{\mu_0^\eps \otimes \mu_0^\eps}(w, z) . 
\eal
This way we interpret the expectation value as a weighted integral on $\R^{4d}$ with respect to the product measure $\mu_0^\eps \otimes \mu_0^\eps$ instead of two separate integrations on $\R^{2d}$. If we consider a sequence of (Monte Carlo or quasi-Monte Carlo) quadrature points 
\bala 
(w_1, z_1),\ldots,(w_M, z_M) \in \R^{4d} 
\eala 
that are sampled from $\mu_0^\eps \otimes \mu_0^\eps$, then 
\bal \label{eqn:ApproxExpValue}
A_M(t) := \frac{1}{M} \sum_{m=1}^M \overline{f_t^\eps(w)} \, f_t^\eps(z) \scal{\geps{\Phi^t(w^m)}}{\Obs \, \geps{\Phi^t(z^m)} }
\eal
is an approximation to \eqref{eqn:ApproxExpectationValues}. Note that the computational effort grows linearly in the number of quadrature points albeit on a space of twice the dimension. In addition, we may even find analytic expressions for
\bal
\scal{\geps{\Phi^t(w)}}{\Obs \, \geps{\Phi^t(z)}}
\eal
for several observables including position, momentum, and kinetic energy operators, as well as all polynomial potentials and the torsional potential. Some examples are given in Appendix \ref{appendix:ExpectiationValues}. 


\subsection{Time discretisation} \label{sec:TimeDiscretisation}
In order to preserve the symplectic structure of the classical Hamiltonian system
\bal\label{eqn:HamiltonianSystem}
\dot z = \J \, \nabla h(z), 
\eal
we need a suitable numerical integrator. In addition to the flow of the Hamiltonian system we have to compute the Herman--Kluk factor $u(t,z)$ and the classical action $S(t,z)$. The computation of $u(t,z)$ requires the solution to the variational equation 
\bal \label{eqn:Variational}
\dot W(t) = \J \, \nabla^2 h(\Phi^t) \, W(t), \quad W(0) = \Id_{2d},
\eal
where $W(t) = D_z \Phi^t$ is the derivative of the flow with respect to the initial values and $\nabla^2 h$ is the Hessian of the Hamiltonian function. For a separable system of the form $h(q,p) = T(p) + V(q)$ the classical action may be seen as solution to the initial value problem 
\bal
\dot S(t, z) = T(\Xi^t(z)) - V(X^t(z)), \quad S(0, z) = 0
\eal
for all $z=(q,p) \in \R^{2d}$. Let us artificially spilt this equation into two, defining $S_T$ and $S_V$ by 
\bal \label{eqn:Action}
\begin{pmatrix} \dot{S}_T(t,z) \\ \dot{S}_V(t,z) \end{pmatrix} = \begin{pmatrix} T(\Xi^t(z)) \\ - V(X^t(z)) \end{pmatrix}, %
\quad \begin{pmatrix} S_T(0, z) \\ S_V(0, z) \end{pmatrix} = \begin{pmatrix} 0 \\ 0 \end{pmatrix} 
\eal
Then we may solve \eqref{eqn:HamiltonianSystem}, \eqref{eqn:Variational}, and \eqref{eqn:Action} simultaneously by a single numerical integrator. In our numerical experiments we use a composition method based on the St\o rmer--Verlet scheme which is symplectic and symmetric, cf. \cite[Chapter~VI]{Hairer2006}. The order of the scheme is controlled by using a composition strategy with composition constants taken from \cite{Kahan1997}. If we assume a separable system of the form $h(q,p) = T(p) + V(q)$ the resulting method is an explicit one, which makes our calculations even more efficient. 


\subsection{Schematic description of the algorithm}

Our goal is to calculate either a wave function, more precisely the solution to the Schr\"odinger equation, or expectation values of operators along this solution. The two tasks require different sampling points but may use the same time-step algorithm. 
\begin{enumerate}
\item
	\begin{enumerate}
	\item Sample $z_1, \ldots, z_M \in \R^{2d}$ from $\mu_0^\eps$; 
	\item[] \textbf{or}
	\item Sample $(w_1, z_1),\ldots,(w_M, z_M) \in \R^{4d}$ from $\mu_0^\eps \otimes \mu_0^\eps$; 
	\end{enumerate}
\item Allocate an array containing the sampling points and the corresponding initial values for the variational equations and the classical action; 
\item Evolve this array according to \eqref{eqn:HamiltonianSystem}, \eqref{eqn:Variational}, and \eqref{eqn:Action} using a high-order symplectic and symmetric numerical integration method; 
\item Compute the Herman--Kluk factor with a continuous phase (cf. Appendix \ref{appendix:ComputationHKFactor}) and the action respectively. 
\item 
	\begin{enumerate}
	\item Calculate the approximate wave function by formula \eqref{eqn:ApproxWaveFunction}; 
	\item[] \textbf{or}
	\item Calculate expectation values by means of formula \eqref{eqn:ApproxExpValue};
	\end{enumerate}
\end{enumerate}
Because of their parallel nature, these algorithms can be implemented in a highly efficient manner. A related article is currently in preparation.


\section{Approximation properties of the algorithm} \label{section:ApproximationProperties} 

The previous section proposes an algorithm for the computation of the \mbox{Herman--Kluk} propagator. Two quantities have to be discretised. The first one is an integral over phase space, the second one a solution to a system of ordinary differential equations. We continues with a systematic analysis of the errors that result from these two discretisation steps. 


\subsection{Phase space discretisation} \label{section:DiscretizingPhaseSpace}
Let us first discretise the phase space integral. In order to facilitate notation we denote the integrand by 
\bal \label{eqn:MCIntegrand}
f^\eps_{t}(z) := r_0^\eps(z) \, u(t, z) \, \e^{\frac \i \eps S(t, z)} \geps{\Phi^t(z)} \in\SF(\R^d)
\eal
with $z \in \R^{2d}$ and $t\in [0,T]$, where $\mu_0^\eps$ and $r_0^\eps$ are chosen as in Assumption \ref{AssumptionMuR}. Then, 
\bala
\Iepst \psi_0 = \int_{\R^{2d}} f^\eps_{t}(z) \d \mu_0^\eps(z)  \in \LRd. 
\eala


\subsubsection{Using Monte Carlo quadrature} \label{sec:MC}
For Monte Carlo quadrature we treat the integrand $f^\eps_{t}$ as a random variable with values in the Hilbert space $\LRd$ distributed according to the probability measure $\mu_0^\eps$ and interpret the phase space integral as its expected value, \ie 
\bal
\Iepst \psi_0 = \int_{\R^{2d}} f^\eps_{t}(z) \d \mu_0^\eps(z) = \E[f^\eps_{t}]. 
\eal
By taking $M$ independent samples $z_1, \dots, z_M\in\R^{2d}$ of the probability distribution~$\mu_0^\eps$ we then define the Monte Carlo estimator 
\bal \label{eqn:MCEstimator}
\psi_M^\eps(t) := \frac 1 M \sum_{m=1}^M f^\eps_{t}(z_m). 
\eal
Note that this is just a linear combination of classically evolved Gaussian wave packets. We obtain the following estimate for its mean squared error, which shows the usual $\landauO(M^{-1/2})$ behaviour with respect to the number of sample points.
\begin{proposition}\label{prop}
Let the initial wave function $\psi_0\in\SF(\R^d)$ satisfy Assumption~\ref{AssumptionMuR} and consider the Monte Carlo estimator $\psi_M^\eps(t)$ defined in \eqref{eqn:MCEstimator}. Then, the mean squared error is given by 
\bala
\E\! \left[ \norm{\psi_M^\eps(t) - \Ieps_t\psi_0}^2 \right] =  \frac{\Vb[f_t^\eps]}{M},
\eala
where $\Vb[f_t^\eps]$ satisfies 
\bala
\Vb[f_t^\eps] \le 3 \int_{\R^{2d}} |u(t,z) r_0^\eps(z)|^2 \d\mu_0^\eps(z) + \norm{\Iepst\psi_0}^2 
\eala
for all $t\in[0,T]$ and $\eps>0$. 

\begin{proof}
We observe that 
\bal \label{eqn:UnbiasedEstimator}
\E[\psi_M^\eps(t)] = \frac 1 M \sum_{M=1}^M \E[f_{t}^\eps] = \Iepst \psi_0.
\eal
Since the samples are independent and identically distributed, we get
\bala
\E\! \left[ \norm{\psi_M^\eps(t) - \Ieps_t\psi_0}^2 \right] = \Vb[\psi_M^\eps(t)] = \frac{1}{M^2} \sum_{m=1}^M \Vb[f_t^\eps] = \frac{\Vb[f_t^\eps]}{M}. 
\eala
Moreover,
\bala
\Vb[f_t^\eps] 
&= \E\!\left[\norm{f_t^\eps - \Ieps_t\psi_0}^2\right] = \int_{\R^{2d}} \norm{f_t^\eps(z)-\Ieps_t\psi_0}^2 \d\mu_0^\eps(z)\\
&= \int_{\R^{2d}} \norm{f_t^\eps(z)}^2 \d\mu_0^\eps(z) - 2 \int_{\R^{2d}} \Re\langle f_t^\eps(z),\Ieps_t\psi_0\rangle \d\mu_0(z) + \norm{\Ieps_t\psi_0}^2.
\eala
By writing 
\bala
\int_{\R^{2d}} \langle f_t^\eps(z),\Ieps_t\psi_0\rangle \d\mu_0^\eps(z) = 
\int_{\R^{4d}} \langle f_t^\eps(z),f_t^\eps(w)\rangle \d(\mu_0^\eps\otimes\mu_0^\eps)(w,z)
\eala
and estimating  
\bala
\abs{\langle f_t^\eps(z),f_t^\eps(w)\rangle}\le \norm{f_t^\eps(z)}\norm{f_t^\eps(w)} \le \frac12(\norm{f_t^\eps(z)}^2 + \norm{f_t^\eps(w)}^2).
\eala 
we therefore find that  
\bala
\abs{\int_{\R^{2d}} \langle f_t^\eps(z),\Ieps_t\psi_0\rangle \d\mu_0^\eps(z)} \le \int_{\R^{2d}} \norm{f_t^\eps(z)}^2 \d\mu_0^\eps(z).
\eala
Since $\norm{f_t^\eps(z)} = \abs{u(t,z)r_0^\eps(z)}$, we conclude the estimate as
\bala
\Vb[f_t^\eps] \le 3 \int_{\R^{2d}} \abs{u(t,z)r_0^\eps(z)}^2 \d\mu_0^\eps(z) + \norm{\Ieps_t\psi_0}^2.
\eala
\end{proof}
\end{proposition}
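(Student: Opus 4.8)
The plan is to treat $\psi_M^\eps(t)$ as the sample mean of the independent, identically distributed $\LRd$-valued random variables $f_t^\eps(z_m)$ and to carry out the textbook variance computation for an unbiased Monte Carlo estimator, taking care that it takes place in a Hilbert space rather than on the real line. First I would check integrability: since $\e^{\frac\i\eps S(t,z)}$ has modulus one and $\norm{g^\eps_{\Phi^t(z)}} = 1$, we have the pointwise identity $\norm{f_t^\eps(z)} = |u(t,z)\,r_0^\eps(z)|$. The prefactor $u(t,\cdot)$ is bounded by the remark following Theorem~\ref{Thm:TheoreticBound}, and $r_0^\eps \in L^1(\d\mu_0^\eps)$ by Assumption~\ref{AssumptionMuR}, so $z \mapsto f_t^\eps(z)$ is Bochner integrable with expectation $\E[f_t^\eps] = \int_{\R^{2d}} f_t^\eps(z)\,\d\mu_0^\eps(z) = \Iepst\psi_0$. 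This is exactly the unbiasedness relation \eqref{eqn:UnbiasedEstimator}. Moreover, because $\mu_0^\eps\in\SF(\R^{2d})$ is rapidly decaying while $r_0^\eps$ grows at most polynomially, the stronger bound $\int |u(t,z)r_0^\eps(z)|^2\,\d\mu_0^\eps(z) < \infty$ holds as well, which is what the variance estimate needs.

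Given unbiasedness, the mean squared error coincides with the variance of the estimator, and I would then exploit independence. In $\Vb[\frac1M\sum_m f_t^\eps(z_m)]$ the off-diagonal terms vanish because the factors are independent and centred, while the diagonal terms are all equal by identical distribution, so $\E[\norm{\psi_M^\eps(t)-\Iepst\psi_0}^2] = \Vb[\psi_M^\eps(t)] = \Vb[f_t^\eps]/M$. This already yields the exact identity asserted in the statement, and only the bound on $\Vb[f_t^\eps]$ remains to be established.

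For that bound I would expand $\Vb[f_t^\eps] = \E[\norm{f_t^\eps - \Iepst\psi_0}^2]$ into the three contributions $\int \norm{f_t^\eps(z)}^2\,\d\mu_0^\eps(z)$, the cross term $-2\int \Re\scal{f_t^\eps(z)}{\Iepst\psi_0}\,\d\mu_0^\eps(z)$, and $\norm{\Iepst\psi_0}^2$. The cross term is the only part requiring attention: I would substitute $\Iepst\psi_0 = \int f_t^\eps(w)\,\d\mu_0^\eps(w)$ inside the inner product, turning it into a double integral over $\mu_0^\eps\otimes\mu_0^\eps$, then estimate $|\scal{f_t^\eps(z)}{f_t^\eps(w)}| \le \norm{f_t^\eps(z)}\norm{f_t^\eps(w)} \le \tfrac12(\norm{f_t^\eps(z)}^2+\norm{f_t^\eps(w)}^2)$ by Cauchy--Schwarz and the arithmetic--geometric mean inequality. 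By symmetry in $w$ and $z$ this dominates the cross term by $\int \norm{f_t^\eps(z)}^2\,\d\mu_0^\eps(z)$, so the first term and the cross term together contribute at most $3\int \norm{f_t^\eps(z)}^2\,\d\mu_0^\eps(z)$.

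Finally, inserting $\norm{f_t^\eps(z)} = |u(t,z)r_0^\eps(z)|$ converts this into $3\int |u(t,z)r_0^\eps(z)|^2\,\d\mu_0^\eps(z)$ and, together with the leftover $\norm{\Iepst\psi_0}^2$, gives precisely the claimed estimate. I do not anticipate a genuine obstacle: the argument is the standard i.i.d.\ Monte Carlo variance calculation, and the only points that demand genuine care are the Hilbert-space formulation of the expectation and the symmetrisation of the cross term, both of which are under control thanks to the growth and integrability hypotheses on $r_0^\eps$ in Assumption~\ref{AssumptionMuR} combined with the boundedness of $u$.
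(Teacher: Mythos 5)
Your proposal is correct and follows essentially the same route as the paper's proof: unbiasedness of the estimator, the i.i.d.\ variance identity $\E[\norm{\psi_M^\eps(t)-\Iepst\psi_0}^2]=\Vb[f_t^\eps]/M$, expansion of the variance into three terms, and control of the cross term by rewriting $\Iepst\psi_0$ as a $\mu_0^\eps$-integral and applying Cauchy--Schwarz together with the arithmetic--geometric mean inequality. The only addition is your explicit Bochner-integrability check, which the paper leaves implicit but which changes nothing in the structure of the argument.
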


The final estimate of Proposition~\ref{prop},
\bala
\Vb[f_t^\eps] \le 3 \int_{\R^{2d}} |u(t,z) r_0^\eps(z)|^2 \d\mu_0^\eps(z) + \norm{\Iepst\psi_0}^2,
\eala
is dominated by its first summand, since Theorem~\ref{Thm:TheoreticBound} provides
\bala
\norm{\Ieps_t\psi_0} = \norm{\Uepst\psi_0} + \landauO(\eps) = \norm{\psi_0} + \landauO(\eps).
\eala
In the case of our previous example we may even calculate the initial variance~$\Vb[f_0^\eps]$ analytically and observe $\eps$-independence as well as convergence to one as $d\to\infty$.

\begin{example}[continues=FBIofGaussian] \label{Ex:MonteCarlo}
For the initial mean squared error of the sampling of a simple Gaussian wave packet $\psi_0 = \geps{z_0} $ we can compute an analytic expression for the variance. We have 
\bala
f_{0}^\eps(z) = r_0^\eps(z) \geps{z} = 2^d \,\e^{\tfrac \i {2 \eps} \brac{p + p_0} \cdot \brac{q - q_0} } \geps{z} \quad\text{and}\quad \E[f_{0}^\eps] = \geps{z_0}
\eala
so that
\bala
\Vb[f_0^\eps] = \int_{\R^{2d}} \abs{r_0^\eps(z)\geps{z} (x)-\geps{z_0} (x)}^2 \d x \d\mu_0^\eps(z) = 1 - 4^{-d}.
\eala
This expression will be underlined by the numerical experiments in \S\ref{sec:SamplingInitialWF}. 
\end{example}


\subsubsection{Using quasi-Monte Carlo quadrature}

Quasi-Monte Carlo quadrature is an equiweighted quadrature on well-chosen deterministic quadrature points. Let $z_1,\ldots,z_M\in\R^{2d}$ and denote by
\bala
\D_M^\eps(z_1,\ldots,z_M;z) = \frac{1}{M} \sum_{m=1}^M \chi_{]-\infty, z]}(z_m) -\mu_0^\eps(]-\infty, z]),\qquad z\in\R^{2d},
\eala
the discrepancy function of the probability measure $\mu_0^\eps$ that quantifies the deviation of the empirical distribution for the rectangular interval
\bala
]-\infty,z] \, := \, ]-\infty,z_1]\times\cdots\times]-\infty,z_M]\subset\R^{2d}.
\eala
If the measure $\mu^\eps_0$ is the product of one-dimensional probability measures so that the inverses of the one-dimensional cumulative distribution functions are accessible, then the well-established low discrepancy sets for the uniform measure on the unit cube $[0,1]^{2d}$ allow to construct points $z_1,\ldots,z_M\in\R^{2d}$ with 
$$
\sup_{z\in\R^{2d}}\abs{\D_M^\eps(z_1,\ldots,z_M;z)} = \landauO\!\left((\log M)^{2d-1} / M\right),
$$
see \cite[Theorem~4]{Aistleitner2015}. The following lemma elucidates, why the discrepancy function is crucial for equiweighted quadrature. 

\begin{lemma}\label{lem:Koksma} Let $f\in\SF(\R^{2d})$ and $\mu_0^\eps$ be a probability distribution on $\R^{2d}$ so that $f\in L^1(\d\mu^\eps_0)$. Then, for all $z_1,\ldots,z_M\in\R^{2d}$
\bala
\frac{1}{M} \sum_{m=1}^M f(z_m) - \int_{\R^{2d}} f(z) \d\mu_0^\eps(z) = \int_{\R^{2d}} \pd^{1:2d} f(z) \,\D_M^\eps(z_1,\ldots,z_M;z)\d z,
\eala
where $\pd^{1:2d} = \pd_1\pd_2\cdots\pd_{2d}$ denotes the mixed partial derivative through all dimensions.
\end{lemma}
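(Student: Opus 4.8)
The plan is to establish the multidimensional Koksma--Hlawka (equivalently Hlawka--Zaremba) identity by expressing $f$ through an iterated fundamental theorem of calculus and then interchanging the order of integration. The cornerstone is the representation, valid for every $f\in\SF(\R^{2d})$ and every $\zeta\in\R^{2d}$,
\bala
f(\zeta) = \int_{\R^{2d}} \pd^{1:2d} f(y)\, \chi_{]-\infty,y]}(\zeta) \d y,
\eala
where $\chi_{]-\infty,y]}(\zeta)=1$ precisely when $\zeta\le y$ holds componentwise. I would prove this by integrating one coordinate at a time: since $f$ is a Schwartz function it vanishes as any single coordinate tends to $+\infty$, so integrating $\pd_j f$ from $\zeta_j$ to $+\infty$ returns $-f$ in that variable. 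Iterating over all $2d$ coordinates accumulates a factor $(-1)^{2d}=1$, which is exactly why the stated identity carries no sign. This even-dimension cancellation is the one point of genuine bookkeeping, and the orientation of the half-infinite box must be chosen to agree with the definition of $\D_M^\eps$.

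Next I would substitute this representation into both terms on the left-hand side. Applying it to each $f(z_m)$ and averaging gives
\bala
\frac{1}{M} \sum_{m=1}^M f(z_m) = \int_{\R^{2d}} \pd^{1:2d} f(y) \left( \frac{1}{M} \sum_{m=1}^M \chi_{]-\infty,y]}(z_m) \right) \d y,
\eala
while writing $f(w) = \int_{\R^{2d}} \pd^{1:2d} f(y)\, \chi_{]-\infty,y]}(w) \d y$, integrating against $\d\mu_0^\eps(w)$, and interchanging the $w$- and $y$-integrations yields
\bala
\int_{\R^{2d}} f(z) \d\mu_0^\eps(z) = \int_{\R^{2d}} \pd^{1:2d} f(y)\, \mu_0^\eps(]-\infty,y]) \d y.
\eala
Subtracting these two displays and recognising the bracketed difference between the empirical average and $\mu_0^\eps(]-\infty,y])$ as the discrepancy function $\D_M^\eps(z_1,\ldots,z_M;y)$ delivers the claim.

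The step demanding care, and the one I expect to be the main obstacle, is the interchange of integration leading to the second display, where the $\mu_0^\eps$-integral in $w$ is pulled inside the $y$-integral. This is justified by Fubini's theorem: $\pd^{1:2d} f$ is again Schwartz, hence $\int_{\R^{2d}} \abs{\pd^{1:2d} f(y)} \d y < \infty$, while the kernel $\chi_{]-\infty,y]}(w)$ is bounded by one and $\mu_0^\eps$ is a probability measure, so the relevant double integral is absolutely convergent. The finite sum in the first display presents no such difficulty. Apart from the sign accounting already noted, no further subtlety arises, and the representation formula together with Fubini completes the proof.
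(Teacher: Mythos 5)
Your proof is correct and follows essentially the same route as the paper's: the iterated fundamental-theorem-of-calculus representation $f(\zeta)=\int_{[\zeta,\infty[}\pd^{1:2d}f(y)\d y$, applied to the empirical average and (via Fubini) to the $\mu_0^\eps$-integral, then subtracted to produce the discrepancy. The only cosmetic difference is that the paper proves the identity in arbitrary dimension with the sign $(-1)^d$ and then specialises to the even dimension $2d$, whereas you work in $\R^{2d}$ directly and note the cancellation $(-1)^{2d}=1$ at the outset.
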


We shall present the proof of Lemma~\ref{lem:Koksma} in Appendix~\ref{appendix:QuasiMonteCarlo} and now turn to its application for the phase space discretisation of the Herman--Kluk propagator. 
We consider $z_1,\ldots,z_M\in\R^{2d}$ and set 
\bal\label{eq:QMC}
\psi_M^\eps(t) := \frac 1 M \sum_{m=1}^M f^\eps_{t}(z_m) 
\eal
with the function
\bala
f_{t}^\eps(z) := r_0^\eps(z) \, u(t, z) \, \e^{\frac \i \eps S(t, z)} \geps{\Phi^t(z)} \in\SF(\R^d)
\eala 
for $z\in\R^{2d}$ and $t\in[0,T]$. We obtain the following weak convergence result. 

\begin{proposition} \label{thm:ConvergenceQMC}
Let $\psi_0 \in \SF(\R^d)$ and $r_0^\eps, \mu_0^\eps$ be defined according to Assumption~\ref{AssumptionMuR} and consider $\psi^\eps_M(t)$ as defined in \eqref{eq:QMC}. Then,
\bal\label{eq:QMC_result}
\psi_M^\eps(t)-\Iepst\psi_0 = \int_{\R^{2d}} \pd^{1:2d}_z f^\eps_t(z) \,\D^\eps_M(z_1,\ldots,z_M;z) \d z.
\eal
In particular, $\lim_{M\to\infty} \sup_{z\in\R^{2d}}\abs{\D^\eps_M(z_1,\ldots,z_M;z)} = 0$ implies for all test functions $\phi\in\SF(\R^d)$
\bal\label{eq:convergence}
\lim_{M \to \infty} \left\langle\phi,\psi_M^\eps(t) - \Iepst \psi_0\right\rangle = 0 .
\eal
\end{proposition}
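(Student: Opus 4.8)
The plan is to reduce the Hilbert-space-valued statement to the scalar Koksma--Hlawka identity of Lemma~\ref{lem:Koksma} by pairing against a test function, and then to read off the weak convergence from the uniform decay of the discrepancy function. First I would fix an arbitrary $\phi\in\SF(\R^d)$ and introduce the scalar function
$$
F_\phi(z) := \scal{\phi}{f^\eps_t(z)} = r_0^\eps(z)\,u(t,z)\,\e^{\frac\i\eps S(t,z)}\,\scal{\phi}{\geps{\Phi^t(z)}},\qquad z\in\R^{2d}.
$$
Because the mixed derivative $\pd^{1:2d}$ acts only on the variable $z$, it commutes with the pairing, so $\pd^{1:2d}F_\phi(z)=\scal{\phi}{\pd^{1:2d}_z f^\eps_t(z)}$, while $\tfrac1M\sum_m F_\phi(z_m)-\int F_\phi\,\d\mu_0^\eps=\scal{\phi}{\psi_M^\eps(t)-\Iepst\psi_0}$. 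Hence, once Lemma~\ref{lem:Koksma} is applicable to $F_\phi$, testing \eqref{eq:QMC_result} against $\phi$ becomes exactly the scalar Koksma--Hlawka identity; since $\SF(\R^d)$ is dense in $\LRd$, this yields the vector identity \eqref{eq:QMC_result}, the right-hand side being read as a weak (Pettis) integral.

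The heart of the matter is therefore to verify that $F_\phi\in\SF(\R^{2d})\cap L^1(\d\mu_0^\eps)$, which I would establish in three steps. By Definition~\ref{def:FBI} the FBI transform maps $\SF(\R^d)$ into $\SF(\R^{2d})$, so $w\mapsto\scal{\phi}{\geps{w}}$ is a Schwartz function on $\R^{2d}$. For a subquadratic potential $\nabla^2 h$ is bounded, so by a Gronwall estimate on the variational equation \eqref{eqn:Variational} the flow $\Phi^t$ is a smooth global diffeomorphism whose $z$-derivatives of order $\ge1$ are bounded uniformly on $\R^{2d}$ and which satisfies a linear lower bound $\abs{\Phi^t(z)}\to\infty$ as $\abs{z}\to\infty$; consequently the composition $\scal{\phi}{\geps{\Phi^t(\cdot)}}$ is again Schwartz. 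Finally, by Assumption~\ref{AssumptionMuR} the factor $r_0^\eps$ is a smooth symbol of polynomial growth, by the Remark following Theorem~\ref{Thm:TheoreticBound} the prefactor $u(t,\cdot)$ is smooth and bounded together with all its derivatives, and $\e^{\frac\i\eps S(t,\cdot)}$ has modulus one with smoothly polynomially-growing derivatives; multiplying a Schwartz function by such symbols preserves the Schwartz class, so $F_\phi\in\SF(\R^{2d})$. Note that $f^\eps_t$ itself does \emph{not} decay in $z$ — its norm equals $\abs{u(t,z)r_0^\eps(z)}$, which only grows polynomially — so the decay that makes Lemma~\ref{lem:Koksma} work comes entirely from the rapid decay of the FBI transform of $\phi$ transported along the flow, and this is precisely why only weak convergence can be expected.

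Granting $F_\phi\in\SF(\R^{2d})$, the convergence statement is immediate. From the reduced identity I would estimate
$$
\abs{\scal{\phi}{\psi_M^\eps(t)-\Iepst\psi_0}} \;\le\; \Big(\sup_{z\in\R^{2d}}\abs{\D^\eps_M(z_1,\ldots,z_M;z)}\Big)\int_{\R^{2d}}\abs{\pd^{1:2d}F_\phi(z)}\,\d z,
$$
where the integral is finite and independent of $M$, being the $L^1$-norm of a derivative of a Schwartz function. Letting $M\to\infty$ and invoking the hypothesis $\sup_z\abs{\D^\eps_M}\to0$ yields \eqref{eq:convergence}. The main obstacle is the Schwartz-class verification of $F_\phi$, and within it the control of $\scal{\phi}{\geps{\Phi^t(\cdot)}}$: one must quantify, via the Gronwall bounds on the variational equation for the subquadratic Hamiltonian, that the flow and all its higher $z$-derivatives grow at most polynomially while $\abs{\Phi^t(z)}$ still tends to infinity, so that the rapid decay of the FBI transform survives composition with $\Phi^t$ and the subsequent multiplication by the polynomially-growing prefactors.
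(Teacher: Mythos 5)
Your proof follows essentially the same route as the paper's: pair against a test function $\phi\in\SF(\R^d)$, apply Lemma~\ref{lem:Koksma} to the scalar map $z\mapsto\scal{\phi}{f^\eps_t(z)}$, read off the vector identity \eqref{eq:QMC_result}, and bound the pairing by the sup of the discrepancy times the $L^1$-norm of the mixed derivative. The only difference is that you spell out the verification that $\scal{\phi}{f^\eps_t(\cdot)}$ is a Schwartz function (via the FBI transform, the polynomial bounds on the flow, and the boundedness of the prefactor), which the paper merely observes without proof --- a welcome addition, not a deviation.
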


\begin{proof}
We observe that for all $\phi\in\SF(\R^d)$ the mapping $z\mapsto \langle\phi,f^\eps_t(z)\rangle$ defines a Schwartz function on $\R^{2d}$. We therefore apply Lemma~\ref{lem:Koksma} to obtain
\bala
\langle\phi,\psi_M^\eps(t)-\Iepst\psi_0\rangle = \int_{\R^{2d}} \pd^{1:2d}_z \langle\phi,f^\eps_t(z)\rangle \,\D^\eps_M(z_1,\ldots,z_M;z) \d z,
\eala
which means
\bala
\psi_M^\eps(t)-\Iepst\psi_0 = \int_{\R^{2d}} \pd^{1:2d}_z f^\eps_t(z) \,\D^\eps_M(z_1,\ldots,z_M;z) \d z.
\eala
Moreover, 
\bala
\abs{\left\langle\phi,\psi_M(t) - \Iepst \psi_0\right\rangle} \le \sup_{z\in\R^{2d}}\abs{\D_M^\eps(z_1,\ldots,z_M;z)} 
\int_{\R^{2d}} \abs{\pd^{1:2d}_z\langle\phi,f^\eps_t(z)\rangle} \d z,
\eala
so that $\lim_{M\to\infty} \sup_{z\in\R^{2d}}\abs{\D^\eps_M(z_1,\ldots,z_M;z)} = 0$ implies \eqref{eq:convergence}.
\end{proof}

Even though we have proven weak convergence, we notice that the mixed derivative of our integrand $f^\eps_t(z)$ depends unfavourably on various parameters as our next example illustrates. 

\begin{example}[continues=Ex:MonteCarlo] \label{Ex:QMC}
We examine the mixed derivative of the initial integrand $f^\eps_0(z)$ for a Gaussian wave packet $\psi_0 = \geps{0} $ centred in the origin $z_0 = 0$. 
We calculate
\bala
\pd^{1:2d} f^\eps_0(z) = f^\eps_0(z) \prod_{j=1}^d \frac{\i}{2\eps^{2}} \left( (x-q_j-\i p_j)(x-\tfrac12 q_j)\right) 
\eala
and obtain 
\bala
\norm{\pd^{1:2d} f^\eps_0(z)}^2 
= \eps^{-4d} \,\prod_{j=1}^d \left(\tfrac18\eps(q_j^2 + 6\eps) + \tfrac14 p_j^2(q_j^2 + 2\eps) \right).
\eala
for the square of the norm. Hence, the norm of the mixed derivative has a multiplicative factor $\eps^{-2d}$ in front of a polynomial in $z$. Our numerical experiments in \S\ref{section:Experiments} indeed confirm that the smaller $\eps$ and the larger the dimension $d$, the more quadrature points are required. However, it seems that beneficial cancellations in the key equation \eqref{eq:QMC_result} allow for a much smaller $M$ than expected. 
\end{example}


\subsection{Error due to the ode solver}

In Theorem \ref{Thm:TheoreticBound} we learned that the Herman--Kluk propagator $\Iepst$ approximates the unitary time evolution $\Uepst$ in the sense
\bala
\sup_{t\in[0,T]} \norm{\Uepst - \Iepst} \leq C(T) \, \eps . 
\eala
Let us examine the time discretised Herman--Kluk propagator 
\bala
\tilde \Iepst: \LRd \to \LRd 
\eala
which is defined by
\bala
\tilde\Iepst\psi := (2\pi\eps)^{-d} \int_{\R^{2d}} \tilde u(t,z)\, \e^{\frac{\i}{\eps}\tilde S(t,z)} \langle \geps_z ,\psi\rangle \geps_{\tilde \Phi^t(z)} \d z
\eala
It depends on the flow $\tilde\Phi^t$, the action $\tilde S$, and the prefactor $\tilde u$ that are computed by the symplectic numerical integrator proposed in \S\ref{sec:TimeDiscretisation}. The following theorem is the main result of this paper. It relates the local accuracy of the time discrete Herman--Kluk propagator with the one of the ode discretisation. 

\begin{thm} \label{Thm:ConvergenceOfMethod}
Let $\gamma$ be the order of the symplectic integrator of the algorithm in \S\ref{sec:TimeDiscretisation}. 
There exists a constant $C > 0$ such that the time discrete Herman--Kluk propagator satisfies
\bala
\norm{ \tilde{\mathcal I}^\eps_\tau -{\mathcal I}^\eps_\tau} \ \leq \ C\, \tau^{\gamma+1}/\eps.
\eala 
for all $\eps>0$ and all time steps $\tau>0$ with $\tau^{\gamma+1} < \eps$. 
\end{thm}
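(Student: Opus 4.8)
The plan is to read this as a one-step, local comparison (the index $\tau$ rather than $t$ signals a single time step) between two Fourier integral operators of Herman--Kluk type that differ only through the triple $(\Phi^\tau, S, u)$ versus its numerically computed counterpart $(\tilde\Phi^\tau, \tilde S, \tilde u)$. First I would invoke the backward error analysis of \citet[Ch.~IX]{Hairer2006}: since the integrator of \S\ref{sec:TimeDiscretisation} is symplectic and of order $\gamma$, its one-step map $\tilde\Phi^\tau$ is exactly symplectic and agrees with the time-$\tau$ flow of a modified Hamiltonian $h+\landauO(\tau^\gamma)$. This serves two purposes. It guarantees that the numerical prefactor $\tilde u$ is well defined, because the determinant under the square root in \eqref{eqn:HKFactor} stays nondegenerate and the correct branch persists; and it delivers the local error estimates $\tilde\Phi^\tau-\Phi^\tau=\landauO(\tau^{\gamma+1})$, $\tilde S-S=\landauO(\tau^{\gamma+1})$, $\tilde u-u=\landauO(\tau^{\gamma+1})$, together with the analogous bounds for all their phase-space derivatives, uniformly in $z\in\R^{2d}$.

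Next I would write the difference of the two kernels as a homotopy integral. Interpolating linearly by $\Phi^s:=(1-s)\Phi^\tau+s\tilde\Phi^\tau$, and likewise $S^s,u^s$ for $s\in[0,1]$, gives
\bala
\tilde u\, \e^{\frac{\i}{\eps} \tilde S}\,\geps{\tilde\Phi^\tau(z)} - u\, \e^{\frac{\i}{\eps} S}\,\geps{\Phi^\tau(z)} = \int_0^1 \frac{\d}{\d s}\brac{u^s\, \e^{\frac{\i}{\eps} S^s}\,\geps{\Phi^s(z)}} \d s ,
\eala
so that $\tilde{\mathcal I}^\eps_\tau-\mathcal I^\eps_\tau$ is an $s$-average of three Herman--Kluk-type operators whose amplitudes involve, respectively, the prefactor variation $(\tilde u-u)$, the phase variation $\tfrac{\i}{\eps}u^s(\tilde S-S)$, and the centre variation $u^s\,\pd_z\geps{\Phi^s(z)}\cdot(\tilde\Phi^\tau-\Phi^\tau)$. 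The first is $\landauO(\tau^{\gamma+1})$ and subdominant. The decisive point is that the other two each carry one factor $1/\eps$: the phase term manifestly, and the centre term because differentiating $\geps{z}$ in its centre $\Phi^s(z)$ produces factors $\tfrac1\eps(x-q-\i p)$ and $\tfrac{\i}{\eps}(x-q)$. Hence every surviving contribution has amplitude of size $\landauO(\tau^{\gamma+1}/\eps)$.

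Finally I would apply the $\L^2$-boundedness estimate from the Fourier integral operator calculus of \cite{Swart2009}, which controls the operator norm of a Herman--Kluk-type operator by finitely many phase-space seminorms of its amplitude, uniformly in $\eps$, as long as the underlying map is symplectic with bounded derivatives. Under the hypothesis $\tau^{\gamma+1}<\eps$ the time step is small, so the interpolants $\Phi^s$ remain uniformly close to $\Phi^\tau$ and nondegenerate and $u^s$ stays bounded; the boundedness hypotheses then hold uniformly in $s\in[0,1]$. Applying the estimate to each of the three operators and integrating over $s$ yields $\norm{\tilde{\mathcal I}^\eps_\tau-\mathcal I^\eps_\tau}\le C\,\tau^{\gamma+1}/\eps$.

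The main obstacle is the $\eps$-power bookkeeping in this last step, and in particular the centre-variation term. There the $1/\eps$ is not attached to a scalar amplitude but to a derivative of the coherent state, $\pd_z\geps{\Phi^s(z)}$, which one must rewrite through the linear factors above. A state-by-state $\L^2$ bound is not enough; one has to pass to a genuine operator-norm bound exploiting the overcomplete ($T^*T$) structure of the coherent-state integral and verify that it reabsorbs these factors so that the centre term contributes exactly $\landauO(\tau^{\gamma+1}/\eps)$ and not a higher inverse power of $\eps$. Securing this, rather than the $s$-homotopy or the local error estimates, is where the real work lies.
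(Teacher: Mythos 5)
Your overall architecture coincides with the paper's: backward error analysis from \cite[Ch.~IX]{Hairer2006} supplies the local estimates $\tilde\Phi^\tau-\Phi^\tau=\landauO(\tau^{\gamma+1})$, $\tilde S-S=\landauO(\tau^{\gamma+1})$ (proved in the paper as Lemma~\ref{lem:act}) and $\tilde u-u=\landauO(\tau^{\gamma+1})$, and the Fourier integral operator calculus of \cite{Swart2009} converts amplitude bounds into operator-norm bounds. Your decomposition differs in form --- a linear homotopy $\Phi^s=(1-s)\Phi^\tau+s\tilde\Phi^\tau$ instead of the paper's telescoping $\mathcal I(\tilde\Phi^\tau,\tilde u-u)+\mathcal I(\tilde\Phi^\tau,u)-\mathcal I(\Phi^\tau,u)$ followed by a Taylor expansion of $z\mapsto g^\eps_z$ around $\Phi^\tau(z)$ --- but it produces the same three contributions: prefactor, phase, and centre variation, and the first two are handled as in the paper.

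The gap is that you stop exactly where the proof has to be carried out. For the centre-variation term the amplitude contains the factor $\tfrac1\eps\bigl(x_j-X^\tau_j(z)\bigr)$, which is unbounded in $x$, so the sup-norm estimate \eqref{eq:linf} cannot be applied to it; you acknowledge this and appeal to an unspecified operator-norm bound exploiting the overcomplete coherent-state structure, declaring that securing it is ``where the real work lies''. But that is precisely the content of the paper's key imported tool, \cite[Lemma~3]{Swart2009}, quoted as \eqref{eq:absorb}: one has $\mathcal I\bigl(\Phi^\tau,\tfrac1\eps(x_j-X^\tau_j)u\bigr)=\mathcal I(\Phi^\tau,v)$ with the \emph{bounded} amplitude $v=-\mathrm{div}_z\bigl(e_j\cdot\mathcal Z_\tau^{-1}u\bigr)$, after which \eqref{eq:linf} applies and this term is in fact $\landauO(\tau^{\gamma+1})$ with no inverse power of $\eps$ at all (the $1/\eps$ in the theorem comes only from the phase term and from the momentum part $w^\eps_{1,2}$); your bookkeeping, which attributes a $1/\eps$ to the position factors as well, is in this respect also off. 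Without this absorption step the argument does not close. A second, related defect: your homotopy makes all intermediate operators live over the interpolants $\Phi^s$, which are convex combinations of symplectic maps and hence generally \emph{not} symplectic, and $S^s$ is not the action of any Hamiltonian flow; the Swart--Rousse estimates you invoke are formulated for Hamiltonian flows with their associated actions, so they do not apply to the pairs $(\Phi^s,S^s)$ without a separate argument. The paper avoids this entirely: by backward error analysis $\tilde\Phi^\tau$ is itself the exact flow of a modified Hamiltonian, so every operator in its chain of estimates is anchored at a genuine Hamiltonian flow ($\Phi^\tau$ or $\tilde\Phi^\tau$), and the higher-order Taylor remainder is controlled by Lemma~\ref{lem:gauss} combined with the same absorption argument.
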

In order to prove Theorem~\ref{Thm:ConvergenceOfMethod} we combine backward error analysis of symplectic integrators with the calculus of Fourier integral operators. Let us thus review the basic concepts of these two fields. 
\subsubsection{Backward error analysis}
We summarise the basic ideas of backward error analysis as presented in \cite[Chapter~IX]{Hairer2006}. We need to solve a Hamiltonian system
\bala
\dot z = \J\nabla h(z)
\eala
with flow map $\Phi^t:\R^{2d}\to\R^{2d}$. If we compare this to the flow $\tilde\Phi^\tau$ of a symplectic numerical discretisation of order~$\gamma$ with time step $\tau>0$, we find that 
\bala
\tilde\Phi^\tau(z) = \Phi^\tau(z) + \landauO(\tau^{\gamma+1}).
\eala
Furthermore, $\tilde\Phi^\tau$ is the exact flow to a modified Hamiltonian system
\bala
\dot z = \J \nabla \tilde h(z)
\eala
with Hamiltonian 
\bala
\tilde h(z) = h(z) + \landauO(\tau^\gamma),
\eala
as shown in \cite[\S IX.8]{Hairer2006}. The Herman--Kluk prefactor is built from the Jacobian matrix of the flow map, so that the discretised prefactor $\tilde u(\tau,z)$ inherits its local accuracy, 
\bal\label{eq:udis}
\tilde u(\tau,z) = u(\tau,z) + \landauO(\tau^{\gamma+1}).
\eal
For the action integral, we obtain the same property via the following lemma. 
\begin{lemma}\label{lem:act}
The action integral $S(t,z)$ of the flow map $\Phi^t$ and its time discrete counterpart $\tilde S(\tau,z)$ satisfy
\bala
\tilde S(\tau,z) = S(\tau,z) + \landauO(\tau^{\gamma+1}).
\eala
\end{lemma}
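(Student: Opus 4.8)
The plan is to reuse the backward error analysis that already produced \eqref{eq:udis}, but to keep track of the fact that $S$ is a functional of the whole trajectory on $[0,\tau]$ and not merely a pointwise image of the endpoint $\Phi^\tau$. Since the discrete action $\tilde S(\tau,z)$ is produced by integrating \eqref{eqn:Action} together with \eqref{eqn:HamiltonianSystem} by the same order-$\gamma$ symplectic scheme, and since the numerical flow $\tilde\Phi^\tau$ coincides with the exact flow of the modified Hamiltonian $\tilde h = h + \landauO(\tau^\gamma)$, the first step is to read $\tilde S$ as the action accumulated along the modified flow. Writing the integrand of \eqref{eq:action} as $\ell_h(z) := \partial_p h(z)\cdot p - h(z)$ for $z=(q,p)$, one then has $S(\tau,z) = \int_0^\tau \ell_h(\Phi^s(z))\,\d s$ and, analogously, $\tilde S(\tau,z) = \int_0^\tau \ell_{\tilde h}(\tilde\Phi^s(z))\,\d s$, with $\ell_{\tilde h}$ built from the modified Hamiltonian $\tilde h$.

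The second step is a pointwise estimate of the integrand difference for $s\in[0,\tau]$. Splitting $\ell_{\tilde h}(\tilde\Phi^s(z)) - \ell_h(\Phi^s(z))$ as $(\ell_{\tilde h}-\ell_h)(\tilde\Phi^s(z)) + (\ell_h(\tilde\Phi^s(z)) - \ell_h(\Phi^s(z)))$, the first term is $\landauO(\tau^\gamma)$ because $\ell_{\tilde h}-\ell_h = \partial_p(\tilde h - h)\cdot p - (\tilde h - h)$ and $\tilde h - h = \landauO(\tau^\gamma)$ together with its derivatives. The second term is $\landauO(\tau^{\gamma+1})$ by the Lipschitz continuity of $\ell_h$ combined with the flow bound $\tilde\Phi^s(z)-\Phi^s(z) = \landauO(\tau^{\gamma+1})$, which holds uniformly for $s\le\tau$: both flows start at $z$ and their vector fields differ by $\landauO(\tau^\gamma)$, so a Gronwall argument over an interval of length $\tau$ gains one power of $\tau$. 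Hence the integrand difference is $\landauO(\tau^\gamma)$ uniformly on $[0,\tau]$.

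The third step is the decisive gain: integrating an $\landauO(\tau^\gamma)$ bound over the interval $[0,\tau]$ of length $\tau$ yields $\tilde S(\tau,z) - S(\tau,z) = \landauO(\tau^{\gamma+1})$, exactly the order claimed. The smoothness and subquadratic growth of $h$, and hence of $T$ and $V$ in the separable form \eqref{eqn:Action}, guarantee that the implied constants are uniform in $z$ on the support of the phase space measure, matching the form in which \eqref{eq:udis} is used.

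The step I expect to be the main obstacle is the identification made in the first paragraph, namely that the action delivered by the numerical quadrature of \eqref{eqn:Action} along the numerical trajectory agrees, up to $\landauO(\tau^{\gamma+1})$, with the action of the modified Hamiltonian flow. One cannot simply invoke symplecticity here, since the action is appended to the Hamiltonian system as an extra, non-canonical variable; what has to be verified is that the composition/splitting quadrature used for $\dot S_T = T(\Xi)$ and $\dot S_V = -V(X)$ is itself consistent to order $\gamma$, so that the $S$-component of the augmented one-step map carries a local error of the required size $\landauO(\tau^{\gamma+1})$. This is precisely where the structure of the St\o rmer--Verlet composition enters, and it is what distinguishes the statement from the pointwise prefactor estimate \eqref{eq:udis}.
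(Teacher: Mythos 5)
Your proof is correct and follows essentially the same route as the paper: both arguments interpret $\tilde S$ as the action of the modified Hamiltonian $\tilde h$ along the modified flow $\tilde\Phi^s$, bound the integrand difference pointwise by $\landauO(\tau^\gamma)$ using $\tilde h - h = \landauO(\tau^\gamma)$ together with $\tilde\Phi^s(z)-\Phi^s(z) = \landauO(\tau^{\gamma+1})$ for $s\in[0,\tau]$, and then gain the final power of $\tau$ by integrating over an interval of length $\tau$. The paper's four-term splitting of $\tilde S(\tau,z)-S(\tau,z)$ is merely a regrouped version of your two-term Lagrangian splitting, and the identification of the numerically computed action with the modified-Hamiltonian action, which you flag as the main obstacle, is made silently (without comment) in the paper's proof as well.
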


\begin{proof}
Let us split the difference of the two action integrals into four parts. 
\bala
&\tilde S(\tau,z) - S(\tau,z) = \\
& \phantom{ - } \int_0^\tau \tfrac{\d}{\d s}\left(\tilde X^{s}(z)-X^s(z)\right)\cdot\tilde \Xi^s(z) \d s &+  \int_0^\tau \tfrac{\d}{\d s}X^s(z)\cdot \left( \tilde \Xi^s(z)-\Xi^s(z)\right)\d s & \\
& - \int_0^\tau \left(\tilde h(\tilde\Phi^s(z))-\tilde h(\Phi^s(z))\right) \d s &- \int_0^\tau \left(\tilde h(\Phi^s(z))-h(\Phi^s(z))\right) \d s . \quad \, &
\eala
Each of the four integrands is at most $\landauO(\tau^\gamma)$, so that integration over the interval $[0,\tau]$ results in $\landauO(\tau^{\gamma+1})$.
\end{proof}


\subsubsection{Fourier integral operators}

The class of Fourier integral operators considered by \citet{Swart2009} comprises the Herman--Kluk propagator as a special case. Let $\Phi^t$ be a smooth Hamiltonian 
flow and $S$ the associated action. If 
\bala
u:\R\times\R^{2d}\times\R\to\C,\quad (t,z,x)\mapsto u(t,z,x)
\eala
is a smooth function with bounded derivatives, then 
\bala
\mathcal I(\Phi^t,u)\psi(x) := (2\pi\eps)^{-d} \int_{\R^{2d}} u(t,z,x) \e^{\frac{\i}{\eps}S(t,z)} \left\langle \geps_z ,\psi\right\rangle \geps_{\Phi^t(z)} (x) \d z
\eala
defines a bounded operator on $\LRd$. According to \citet[Theorem~1]{Swart2009}, whenever $(t,z)\mapsto u(t,z)$ is a smooth function that does not depend on $x$, then one can estimate the operator norm as
\bal\label{eq:linf}
\norm{\mathcal I(\Phi^t,u)} \le 2^{-d/2} \norm{u(t,\cdot)}_\infty.
\eal
Moreover, particular $x$-dependent linear factors absorb an inverse power of the semi-classical parameter $\eps$. That is, by \cite[Lemma~3]{Swart2009}, we have 
\bala
\mathcal I(\Phi^t,\tfrac{1}{\eps}(x_j-X^t_j)u) = \mathcal I(\Phi^t,v)
\eala
for all $j=1,\ldots,d$, where
\bal\label{eq:absorb}
v(t,z,x) := -\mathrm{div}_z\left(e_j\cdot \mathcal Z_t^{-1}(z) u(t,z,x)\right).
\eal
Here, $e_j\in\C^d$ is the $j$th standard basis vector, and 
\bala
\mathrm{div}_z f := \sum_{k=1}^d \pd_{q_k} f_k - \i \sum_{k=1}^d \pd_{p_k} f_k
\eala
for smooth vector valued functions $f:\R^{2d}\to\C^d$. 
Furthermore, we define a smooth mapping to the set of invertible complex $d\times d$ matrices 
\bala
\mathcal Z_t := \pd_q X^t - \i \pd_p X^t + \i \pd_q \Xi^t + \pd_p \Xi^t
\eala 
using the four blocks of the Jacobian matrix of the flow map $\Phi^t$. 


\subsubsection{Derivatives of Gaussian wave packets}

The last building block of the proof of Theorem~\ref{Thm:ConvergenceOfMethod} is the calculation of 
the derivatives of a Gaussian wave packet with respect to its phase space centre. For the gradient, we obtain
\bala
\nabla_z \geps_z (x) = 
\begin{pmatrix}\frac{1}{\eps}(x-q)-\tfrac{\i}{\eps}p\\*[1ex] \frac{\i}{\eps}(x-q)\end{pmatrix} \geps_z (x)
=: v^\eps(x-q,p) \geps_z (x),
\eala
where $z = (q,p)\in\R^{2d}$ and $x\in\R^d$. The higher order derivatives can be expressed in terms of products of multivariate polynomials with the Gaussian wave packet.

\begin{lemma}\label{lem:gauss} 
For fixed $x\in\R^d$, we consider the function $\R^{2d}\to\C$, $z\mapsto \geps_z (x)$. Then, for any multi-index $\alpha\in\N^{2d}$, there exists a multi-variate polynomial 
$\mathcal P^\eps_\alpha:\R^{2d}\to\C$ 
of degree $|\alpha|$ such that for all $z=(q,p)\in\R^{2d}$,
\bala
D^\alpha_z \geps_z (x) = \mathcal P_\alpha^\eps(x-q,p) \,\geps_z (x).
\eala
In particular, 
\bala
\mathcal P^\eps_\alpha(x-q,p) = \sum_{k+|\beta|\le |\alpha|} \lambda_{\alpha}(k,\beta)\, \eps^{-k} \,v^\eps(x-q,p)^\beta,
\eala
where the coefficients $\lambda_\alpha(k,\beta)\in\C$ are $\eps$-independent complex numbers indexed by $(k,\beta)\in\N\times\N^{2d}$. 
\end{lemma}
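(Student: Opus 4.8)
The plan is to prove the statement by induction on the total order $|\alpha|$ of the multi-index, exploiting the fact that differentiation in $z$ reproduces the Gaussian wave packet up to a polynomial factor. The base case $|\alpha| = 0$ is trivial, since $D^0_z \geps_z = \geps_z$ corresponds to the constant polynomial $\mathcal P^\eps_0 = 1$, which has degree zero and fits the stated form with the single term $k=0$, $\beta = 0$.

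For the inductive step I would assume the claim for all multi-indices of order $|\alpha|$ and consider $D^{\alpha + e_\ell}_z \geps_z$ for some coordinate direction $\ell\in\{1,\ldots,2d\}$. Writing $D^{\alpha+e_\ell}_z \geps_z = \pd_{z_\ell}\brac{\mathcal P^\eps_\alpha(x-q,p)\,\geps_z}$ and applying the product rule, two contributions arise: one where $\pd_{z_\ell}$ hits the polynomial prefactor, and one where it hits the Gaussian itself. The second contribution is handled by the already-computed gradient formula $\nabla_z \geps_z = v^\eps(x-q,p)\,\geps_z$, so that $\pd_{z_\ell}\geps_z = v^\eps_\ell(x-q,p)\,\geps_z$, which simply multiplies the prefactor by one more component of $v^\eps$. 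The key observation is that each component of $v^\eps$ is, up to the factor $\tfrac1\eps$, an affine function of $(x-q,p)$; more precisely $v^\eps(x-q,p) = \tfrac1\eps M (x-q,p)$ for a fixed $\eps$-independent matrix $M$. Differentiating the polynomial $\mathcal P^\eps_\alpha$ with respect to $q$ or $p$ lowers its degree by one, while multiplying by a component of $v^\eps$ raises it by one; in both cases the resulting object is again a polynomial in $(x-q,p)$ of degree at most $|\alpha|+1$, which establishes the degree claim.

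To obtain the explicit structural form of $\mathcal P^\eps_\alpha$ as a sum over $\eps^{-k} v^\eps(x-q,p)^\beta$, I would track how the two operations act on a typical term $\eps^{-k} v^\eps(x-q,p)^\beta$. Multiplying by the component $v^\eps_\ell$ sends it to $\eps^{-k} v^\eps(x-q,p)^{\beta + e_\ell}$, which increases $|\beta|$ by one while keeping $k$ fixed, so $k + |\beta|$ rises by one and stays bounded by $|\alpha|+1$. Applying $\pd_{z_\ell}$ to $v^\eps(x-q,p)^\beta$ produces, via the chain rule, a factor $\pd_{z_\ell} v^\eps_j = \pm\tfrac1\eps\,(\text{constant})$ for each component, thereby lowering $|\beta|$ by one and raising the power of $\eps^{-1}$ by one, so that $k+|\beta|$ is unchanged and the constraint $k+|\beta|\le|\alpha|+1$ is preserved. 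Crucially, all the numerical coefficients produced are $\eps$-independent, since the only $\eps$-dependence enters through the explicit powers $\eps^{-k}$ and through the prefactor $\tfrac1\eps$ in $v^\eps$; this confirms that the $\lambda_\alpha(k,\beta)$ are $\eps$-independent complex numbers.

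The main obstacle, though more bookkeeping than conceptual, is verifying that the index constraint $k+|\beta|\le|\alpha|$ propagates correctly through the induction and that no spurious $\eps$-dependence sneaks into the coefficients. The cleanest way to control this is to adopt the $(k,\beta)$ weighting in which each differentiation contributes exactly one unit to the quantity $k+|\beta|$: since $\pd_{z_\ell}$ either multiplies by a component of $v^\eps$ (raising $|\beta|$, leaving $k$) or differentiates an existing $v^\eps$ factor (raising $k$, lowering $|\beta|$), the sum $k+|\beta|$ increases by at most one per derivative, and the induction closes immediately. I would therefore emphasise this invariant rather than attempting to compute the coefficients $\lambda_\alpha(k,\beta)$ in closed form, which is unnecessary for the statement.
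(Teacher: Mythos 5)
Your proposal is correct and follows essentially the same route as the paper: induction on $|\alpha|$ via the product rule, using the gradient formula $\nabla_z \geps_z = v^\eps(x-q,p)\,\geps_z$, and tracking how differentiation of a $v^\eps$-factor trades $|\beta|$ for $k$ (keeping $k+|\beta|$ fixed) while multiplication by a component of $v^\eps$ raises $|\beta|$ by one. Your explicit emphasis on the invariant $k+|\beta|$ and on the $\eps$-independence of the coefficients is exactly the bookkeeping the paper's proof performs implicitly in its final display.
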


\begin{proof}
We argue by induction and calculate 
\bala
D^{\alpha+e_j}_z \geps_z (x) &= 
\pd_j \left( \mathcal P^\eps_\alpha(x-q,p) \geps_z (x)\right)\\*[1ex]
&=
\left( \mp(\pd_j \mathcal P^\eps_\alpha)(x-q,p) + \mathcal P^\eps_\alpha(x-q,p) \,e_j\cdot v^\eps(x-q,p) \right)\geps_z (x),
\eala
where the $\mp$ sign depends on whether $j \in \{1,\ldots,d\}$ or $j \in \{d+1,\ldots,2d\}$. Finally, we observe that
\bala
& \mp(\pd_j \mathcal P^\eps_\alpha)(x-q,p) + \mathcal P^\eps_\alpha(x-q,p) \,e_j\cdot v^\eps(x-q,p)\\*[1ex]
& \quad = 
\sum_{k+|\beta|\le|\alpha|} \lambda_\alpha(k,\beta)\, \eps^{-k}\left( \mp \pd_j v^\eps(x-q,p)^\beta + v^\eps(x-q,p)^{\beta+e_j}\right) \\
& \quad = 
\sum_{k+|\beta|\le|\alpha|+1} \lambda_{\alpha+e_j}(k,\beta)\, \eps^{-k}\,v^\eps(x-q,p)^\beta.
\eala
\end{proof}


\subsubsection{The proof of Theorem~\ref{Thm:ConvergenceOfMethod}}
In the last three paragraphs we prepared everything we need in order to for prove Theorem~\ref{Thm:ConvergenceOfMethod}.
\begin{proof}
We estimate the accuracy of the time discrete Herman--Kluk propagator in four steps.

\paragraph{\normalfont\em Towards the first estimate.}
We write
\bala
\tilde {\mathcal I}^\eps_\tau - {\mathcal I}^\eps_\tau &= {\mathcal I}(\tilde\Phi^\tau,\tilde u)-{\mathcal I}(\Phi^\tau,u)\\
& = \mathcal I(\tilde\Phi^t,\tilde u- u)+ \mathcal I(\tilde\Phi^\tau,u)-\mathcal I(\Phi^\tau,u), 
\eala
so that \eqref{eq:linf} and \eqref{eq:udis} imply
\bala
\norm{\mathcal I(\tilde\Phi^t,\tilde u- u)} \ \le\ 2^{-d/2} \norm{\tilde u(\tau,\cdot) - u(\tau,\cdot)}_\infty = \landauO(\tau^{\gamma+1}).
\eala

\paragraph{\normalfont\em Towards the second estimate.}
Hence, for the rest of the proof we are only concerned with
\bala
& \mathcal I(\tilde\Phi^\tau,u)-\mathcal I(\Phi^\tau,u) \\
& = (2\pi\eps)^{-d} \int_{\R^{2d}} u(\tau,z)
 \left( \e^{\frac{\i}{\eps}\tilde S(\tau,z)} \geps_{\tilde\Phi^\tau(z)} - \e^{\frac{\i}{\eps}S(\tau,z)} \geps_{\Phi^\tau(z)} \right) 
 \left\langle \geps_z ,\cdot \right\rangle \d z.
\eala
We express the difference in the integrand as
\bala
& \e^{\frac{\i}{\eps}\tilde S(\tau,z)} \geps_{\tilde\Phi^\tau(z)} - \e^{\frac{\i}{\eps}S(\tau,z)} \geps_{\Phi^\tau(z)} \\
& =
\left( 1 - \e^{\frac{\i}{\eps}(S(\tau,z)-\tilde S(\tau,z))}\right) \e^{\frac{\i}{\eps}\tilde S(\tau,z)} \geps_{\tilde\Phi^\tau(z)} + 
\e^{\frac{\i}{\eps}S(\tau,z)} \left(\geps_{\tilde\Phi^\tau(z)} - \geps_{\Phi^\tau(z)} \right)
\eala
and denote 
\bala
w_0(\tau,z) := 1 - \exp\!\left(\tfrac{\i}{\eps}\left(S(\tau,z)-\tilde S(\tau,z)\right)\right).
\eala 
Then, 
\bala
& \mathcal I(\tilde\Phi^\tau,u)-\mathcal I(\Phi^\tau,u) = \mathcal I(\tilde\Phi^\tau, u w_0) \\
&\qquad + 
(2\pi\eps)^{-d} \int_{\R^{2d}} u(\tau,z)
\e^{\frac{\i}{\eps}S(\tau,z)} \left(\geps_{\tilde\Phi^\tau(z)} - \geps_{\Phi^\tau(z)} \right)\left\langle \geps_z ,\cdot \right\rangle \d z,
\eala
and Lemma~\ref{lem:act} yields
\bala
\norm{\mathcal I(\tilde\Phi^\tau, u w_0) } \le 2^{-d/2}\norm{u(\tau,\cdot)w_0(\tau,\cdot)}_\infty = \landauO(\tau^{\gamma+1}/\eps).
\eala

\paragraph{\normalfont\em Towards the third estimate.}
We now use the Taylor series of the analytic function $z\mapsto \geps_z (x)$ around a point $z_0\in\R^{2d}$, 
\bala
\geps_z (x) - \geps_{z_0} (x) = (z-z_0)\cdot \nabla_z \geps_{z} (x)\mid_{z=z_0} + \sum_{|\alpha|\ge 2} \frac{(z-z_0)^\alpha}{\alpha!} \, D^\alpha_z \geps_z (x) \mid_{z=z_0}.
\eala
We denote 
\bala
\delta^\tau(z) := (D\Phi^\tau)(z) \left(\tilde\Phi^\tau(z)-\Phi^\tau(z)\right),
\eala
observe that 
\bala
\delta^\tau(z) = \landauO(\tau^{\gamma+1}),
\eala 
and use Lemma~\ref{lem:gauss} to write 
\bala
\geps_{\tilde\Phi^\tau(z)} (x) - \geps_{\Phi^\tau(z)} (x) = \left( w_{1,1}^\eps(\tau,z,x) + w_{1,2}^\eps(\tau,z) + r^\eps(\tau,z,x)\right) \geps_{\Phi^\tau(z)} (x)
\eala
with
\bala
w_{1,1}^\eps(\tau,z,x) &= \delta^\tau(z) \cdot \begin{pmatrix}\frac{1}{\eps}(x-X^t(z)) \\*[1ex] \frac{\i}{\eps} (x-X^t(z))\end{pmatrix} \\
&= \frac{1}{\eps}\sum_{j=1}^d \left(\delta_j^\tau(z) +\i\delta_{j+d}^\tau(z)\right) \left(x_j-X_j^t(z)\right),\\*[1ex]
w_{1,2}^\eps(\tau,z) &= \delta^\tau(z) \cdot \begin{pmatrix}-\frac{\i}{\eps}\Xi^t(z) \\ 0\end{pmatrix},\\*[1ex]
r^\eps(\tau,z,x) &= \sum_{|\alpha|\ge 2} \frac{ \delta^\tau(z)^\alpha}{\alpha !} \,\mathcal P^\eps_\alpha(x-X^\tau(z),\Xi^\tau(z)) .
\eala
This implies that
\bala
& (2\pi\eps)^{-d} \int_{\R^{2d}} u(\tau,z)
\e^{\frac{\i}{\eps}S(\tau,z)} \left(\geps_{\tilde\Phi^\tau(z)} - \geps_{\Phi^\tau(z)} \right)\left\langle \geps_z ,\cdot \right\rangle \d z\\
& = 
\mathcal I(\Phi^\tau, uw_{1,1}^\eps) + \mathcal I(\Phi^\tau,uw_{1,2}^\eps) + \mathcal I(\Phi^\tau,u r^\eps)
\eala
By \eqref{eq:absorb}, we have 
\bala
\mathcal I(\Phi^\tau,uw^\eps_{1,1}) = \mathcal I(\Phi^\tau,v_1)
\eala
with
\bala
v_1(\tau,z) = - \sum_{j=1}^d {\mathrm{div}}_z\left( e_j\cdot \mathcal Z_\tau^{-1}(z) u(\tau,z) \left(\delta_j^\tau(z) +\i\delta_{j+d}^\tau(z)\right)\right).
\eala
Applying \eqref{eq:linf} once more, we have
\bala
\norm{\mathcal I(\Phi^\tau,uw^\eps_{1,1})} & \le 2^{-d/2} \norm{v_1(\tau,\cdot)}_\infty = \landauO(\tau^{\gamma+1}),\\*[1ex]
\norm{\mathcal I(\Phi^\tau,uw^\eps_{1,2})} & \le 2^{-d/2}\norm{u(\tau,\cdot) w^\eps_{1,2}(\tau,\cdot)}_\infty = \landauO(\tau^{\gamma+1}/\eps).
\eala

\paragraph{\normalfont\em Towards the fourth estimate.}
It remains to bound $\mathcal I(\Phi^\tau,ur^\eps)$. By Lemma~\ref{lem:gauss}, 
\bala
\mathcal P^\eps_\alpha(x-X^\tau(z),\Xi^\tau(z)) = \sum_{(k,\beta)\in M_\alpha} \lambda_\alpha(k,\beta) \,\eps^{-k}\, v^\eps(x-X^\tau(z),\Xi^\tau(z))^\beta.
\eala
Therefore, the crucial terms in $r^\eps$ are of the form
\bala
\delta^\tau(z)^\alpha \,\eps^{-k}\, v^\eps(x-X^\tau(z),\Xi^\tau(z))^\beta
\eala
with $|\alpha|\ge 2$ and $k+|\beta|\le |\alpha|$. 
The previous arguments for bounding $\mathcal I(\Phi^\tau,uw^\eps_{1,1})$ and 
$\mathcal I(\Phi^\tau,uw^\eps_{1,2})$ then provide
\bala
\norm{\mathcal I(\Phi^\tau,ur^\eps)} \ =\ \landauO((\tau^{\gamma+1}/\eps)^{|\alpha|}) \ = \ \landauO(\tau^{\gamma+1}/\eps).
\eala
\end{proof}


\section{Numerical examples} \label{section:Experiments}
Let us underline the results of the previous section with a series of numerical examples. First, we will test the robustness of our algorithm by calculating the full wave function of the quantum mechanical harmonic oscillator problem in one dimension and comparing it to the analytic solution. Next, we will do the same for the torsional potential in $2d$ using a reference solution that is computed by a split-step Fourier method. After that, we calculate expectation values using the approach presented in \S\ref{section:ExpectationValues}. We shall do this again for a $2d$ torsional potential and - in order to underline the capability for calculation high-dimensional problems - the Henon--Heiles potential in $6d$. Finally, we illustrate one of our main results, Theorem \ref{Thm:ConvergenceOfMethod}, by examining the behaviour of the overall error of our method with respect to the time step size of the underlying symplectic ode solver. 


\subsection{Approximation of the initial wave function}\label{sec:SamplingInitialWF}

We first examine the quality of our algorithm with respect to the discretization of phase space as described in \S\ref{section:PhaseSpaceDiscretization}. Let us continue with our example from \S\ref{sec:MC}. 

\begin{example}[continues=Ex:MonteCarlo]
For the sampling of the initial Gaussian wave function $\psi_0=\geps{z_0} $ we found that for
\bala
f_{0}^\eps(z) := 2^d \e^{\tfrac \i {2 \eps} \brac{p + p_0} \cdot \brac{q - q_0} } \geps{z} . 
\eala
we have 
\bala
\E[f^\eps_{0}] = \geps{z_0} \quad\text{and}\quad\Vb(f^\eps_0) = 1 - 4^{-d}. 
\eala
Figure \ref{MCInitialSampling} shows the sampling error for the initial wave function with respect to the number of Monte Carlo quadrature points $M$. Each wave function is produced by averaging over $10$ independent samples. The two pictures show the error for one and two space dimensions respectively. Note that the error shows no dependence on the value of $\eps$. 
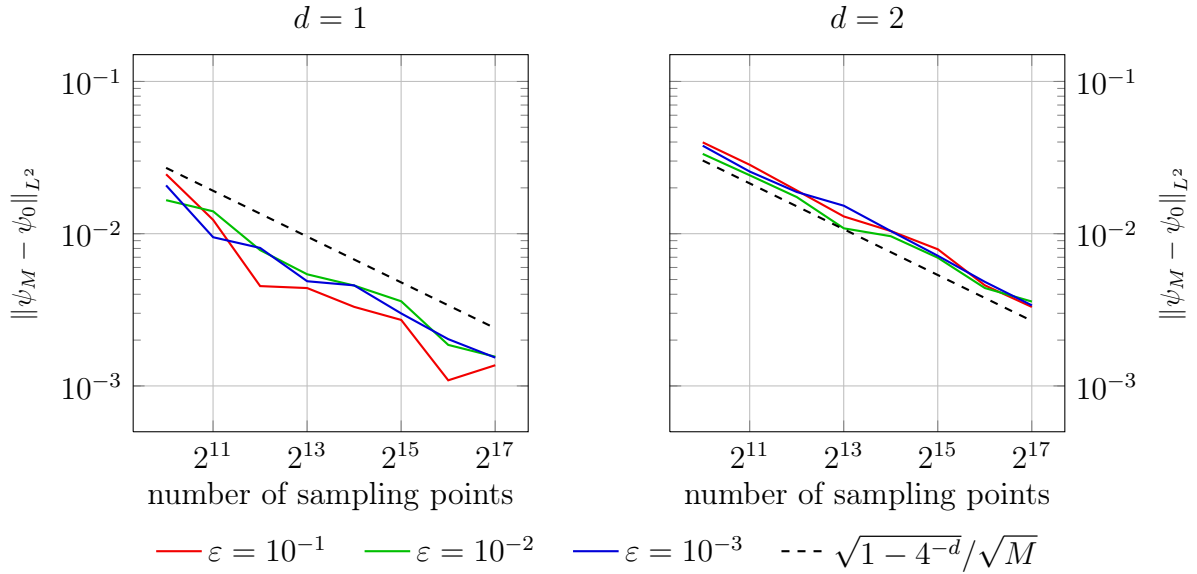
\begin{figure}[!ht] 
\begin{centering}
 	 	 \begin{tikzpicture} 
 	 	 	 \begin{loglogaxis}[ 	
			title = {$d=1$}, 	
			title style={align=left, font=\normalsize},
			width = 0.33 \textwidth, 	
			height = 5cm, 		
			scale only axis, 
 		log basis x={2},
			xtickten = {11,13,15,17},
			ytickten={-3,-2,-1},
			ymin = 0.0005,
			ymax = 0.15, 
			ylabel near ticks, yticklabel pos=left, 
			xlabel={number of sampling points}, 
			ylabel={$ {\norm{\psi_M - \psi_0 }}_{L^2} $}, 
			ylabel style={font=\footnotesize}, 
			grid=major,
			legend columns = -1, 
			legend style={draw = none}, 
	 		legend to name= LegendMCInitialSampling, 
			legend entries={$\eps = 10^{-1}\quad$,$\eps = 10^{-2}\quad$,$\eps = 10^{-3}\quad$,$\sqrt{1 - 4^{-d}}/\sqrt M$}] 
			\addplot[thick,myred] table[x index=0,y index=2]{plotdata/InitialSampling_d=1_Nx=512.dat}; 
			\addplot[thick,mygreen] table[x index=0,y index=3]{plotdata/InitialSampling_d=1_Nx=512.dat}; 
			\addplot[thick,myblue] table[x index=0,y index=4]{plotdata/InitialSampling_d=1_Nx=512.dat}; 
			\addplot[thick,black,dashed] table[x index=0,y index=1]{plotdata/InitialSampling_d=1_Nx=512.dat}; 
	 	 	 \end{loglogaxis} 
 	 	 \end{tikzpicture} 
\hfill
 	 	 \begin{tikzpicture} 
 	 	 	 \begin{loglogaxis}[ 	
			title = {$d=2$}, 	
			title style={align=left, font=\normalsize},
			width = 0.33 \textwidth, 	
			height = 5cm, 		
			scale only axis, 
 		log basis x={2},
			xtickten = {11,13,15,17},
			ytickten={-3,-2,-1},
			ymin = 0.0005,
			ymax = 0.15, 
			ylabel near ticks, yticklabel pos=right, 
			xlabel={number of sampling points}, 
			ylabel={$ {\norm{\psi_M - \psi_0 }}_{L^2} $}, 
			ylabel style={font=\footnotesize}, 
			grid=major] 
			\addplot[thick,myred] table[x index=0,y index=2]{plotdata/InitialSampling_d=2_Nx=256.dat}; 
			\addplot[thick,mygreen] table[x index=0,y index=3]{plotdata/InitialSampling_d=2_Nx=256.dat}; 
			\addplot[thick,myblue] table[x index=0,y index=4]{plotdata/InitialSampling_d=2_Nx=256.dat}; 
			\addplot[thick,black,dashed] table[x index=0,y index=1]{plotdata/InitialSampling_d=2_Nx=256.dat}; 
	 	 	 \end{loglogaxis} 
 	 	 \end{tikzpicture} 
\ref*{LegendMCInitialSampling}
\caption{Initial sampling error for $\psi_0=\geps{z_0} $ in dimensions $d=1$ (upper panel) and $d=2$ (lower panel) with respect to the number of Monte Carlo quadrature points $M$.} 
\label{MCInitialSampling} 
\end{centering}
\end{figure} 

\end{example}

\subsection{Time evolution of the wave function}
Now we shall use both discretisations, i.e. in time and phase space, to calculate the solution to the semi-classical Schr\"odinger equation for different potentials. 
\begin{example}[The harmonic oscillator] \label{section:Harmonic Oscillator}
The quantum mechanical harmonic oscillator is one of the few examples for which an analytic solution is known explicitly. Furthermore, the Herman--Kluk propagator is exact for quadratic potentials. As a proof of concept we will restrict ourselves to one dimension where a grid based approach is still feasible. This allows us to test and demonstrate the robustness of our algorithm even for large times, in this case $t \in [0,100]$. Let us consider the harmonic oscillator potential $V(x) = x^2/2$ and initial data 
\bala
\psi_0 = \geps{z_0} .
\eala
Let $q(t)$, $p(t)$ and $S(t)$ be the position, momentum and action of the classical harmonic oscillator, i.e. 
\bala
q(t) &= x_0 \cos(t) + \xi_0 \sin(t) \\
p(t) &=\xi_0 \cos(t) - x_0 \sin(t) \\
S(t) &= \tfrac{1}{2} \sin(t) \left(\left(\xi_0^2-x_0^2\right) \cos(t)-2 \xi_0 x_0 \sin(t)\right). 
\eala 
Then the analytic solution to the quantum mechanical problem is given by 
\bala
\psi_a(t,x) = (\pi \eps)^{-1/4} \exp\brac{ \frac{\i}{\eps} \, S(t) - \frac{\i}{2} \, t - \frac{1}{2 \eps} \brac{x-q(t)}^2 + \frac{\i}{\eps} \, p(t) (x-q(t)) }, 
\eala
cf. \cite[Thm 2.5]{Hagedorn1998}. 

For the numerical calculations consider an equidistant grid in classical phase space $\R^2$ with grid size $\delta_q = \delta_p = 0.1$. Consider another equidistant grid in the wave function's position space with grid size $\delta_x = 2^{-8} \pi$ on the interval $[-\pi, \pi]$. The time is discretised in equally spaced steps $\tau = 0.05 $, starting at $t=0$ up to the final time $t=100$. As initial position and momentum we take $x_0 = 1$ and $\xi_0 = 0$. 
Figure \ref{fig:ErrorHKtoAnalyticHarmonic1d} shows the error between the Herman--Kluk and the analytic solution in the $\L^2$-norm for different values of the semi-classical parameter $\eps \in \{10^{-1},10^{-2},10^{-3}\}$. It underlines that the Herman--Kluk propagator is exact for quadratic potentials and that our algorithm preserves this feature even over long times. 
\begin{figure}[!ht]
	\begin{center}
		\begin{tikzpicture}
			\begin{semilogyaxis}[
				scale only axis, 
				height=5cm,
				width=0.75 \textwidth,
				xlabel={time},
				ylabel near ticks, yticklabel pos=left, 
				ylabel={$\norm{\psi-\psi_a}_{\L^2}$},
				xmin = -2,
				xmax = 102,
				grid=major,
				legend columns = -1, 
				legend style={cells={anchor=west},
				legend pos= south east}]
				\addplot[very thick, myred] table[x index=0,y index=1]{plotdata/error2analytic_HK_d=1_potential=harmonic_multiple_epsilons.dat};
				\addplot[very thick, dashed, myred] table[x index=0,y index=2]{plotdata/error2analytic_HK_d=1_potential=harmonic_multiple_epsilons.dat};
				\addplot[very thick, dotted, myred] table[x index=0,y index=3]{plotdata/error2analytic_HK_d=1_potential=harmonic_multiple_epsilons.dat};
				\legend{{$\eps = 10^{-1}$},{$\eps = 10^{-2}$},{$\eps = 10^{-3}$}}
			\end{semilogyaxis}
		\end{tikzpicture}
		\vspace{-12pt}
		\caption{Evolution of the deviation between the Herman--Kluk and the analytic solution for the harmonic oscillator potential and different values of $\eps$. The error stays close to the initial sampling error even over long times. } 
		\label{fig:ErrorHKtoAnalyticHarmonic1d}
	\end{center}
\end{figure}
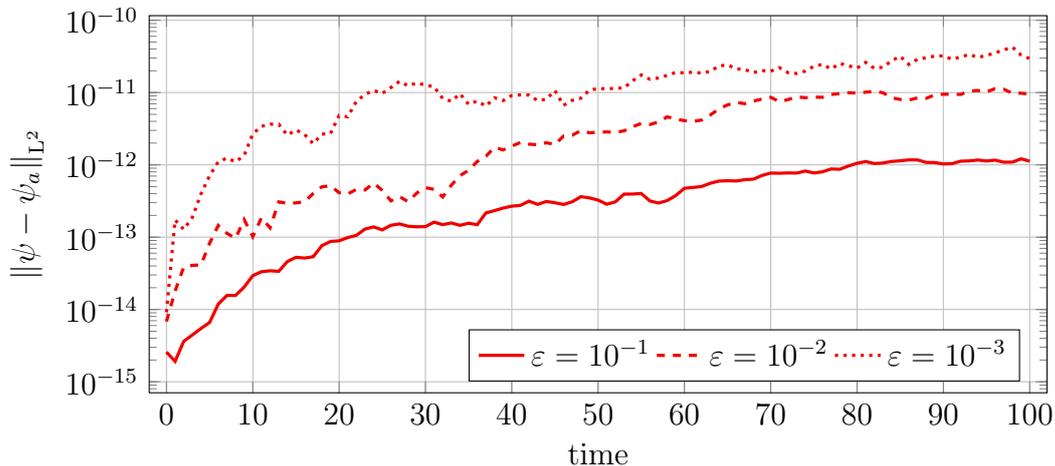

\end{example}


\begin{example}[The torsional potential in two dimensions] \label{Ex:WaveFunctionTorsionTwoD}
Intramolecular rotations are often modelled by a torsional potential of the form 
\bala
V(x) = \sum_{k=1}^d (1-\cos(x_k)). 
\eala
In two dimensions we can still evaluate the Herman--Kluk wave function on an equidistant grid and thus compare it to a reference solution that we calculated with a split-step Fourier method. As initial datum we consider a Gaussian wave packet $\psi_0 = \geps{z_0} $ with $x_0 = (1,0)^T$ and $\xi_0 = (0,0)^T$. Note that this is the same initial wave function as used in \cite[Section 5]{Faou2009}. Figure~\ref{FourierHKQ_d=2_potential=torsion_epsilon=0.01_MultipleMs_Nx=256__errors} shows the error between the reference solution and the HK wave function in the $\L^2$ norm as a function of time. The total error is a combination of the asymptotic error of order $\eps$, the quadrature error which depends on the number $M$ of quasi-Monte Carlo points, and the time discretisation error of the symplectic method of order $\gamma = 8$. We choose several different values of $M$ for both $\eps = 10^{-1}$ and $\eps = 10^{-2}$ in order to illustrate the behaviour of the error with respect to these parameters. 

\begin{figure}[!ht] 
\begin{centering}
 	 	 \begin{tikzpicture} 
 	 	 	 \begin{semilogyaxis}[ 	
			title = {$\eps=10^{-1}$}, 	
			title style={align=left, font=\normalsize},
			width = 0.33 \textwidth, 	
			height = 5cm, 		
			scale only axis, 
			xtick = {0,5,10,15,20},
			ytickten={-3,-2,-1,0},
			ymin = 0.0005,
			ymax = 1, 
			ylabel near ticks, yticklabel pos=left, 
			xlabel={time}, 
			ylabel={$ \norm{\psi - \psi_{\textrm{ref} \ } }_{L^2} $}, 
			ylabel style={font=\footnotesize}, 
			grid=major] 
			\addplot[very thick,myred,dotted] table[x index=0,y index=1]{plotdata/errors_HK_vs_Fourier_d=2_epsilon=0.1_Nx=256.dat}; 
			\addplot[very thick,myred, dashed] table[x index=0,y index=3]{plotdata/errors_HK_vs_Fourier_d=2_epsilon=0.1_Nx=256.dat}; 
			\addplot[very thick,myred] table[x index=0,y index=5]{plotdata/errors_HK_vs_Fourier_d=2_epsilon=0.1_Nx=256.dat}; 
	 	 	 \end{semilogyaxis} 
 	 	 \end{tikzpicture} 
\hfill
%
 	 	 \begin{tikzpicture} 
 	 	 	 \begin{semilogyaxis}[ 	
			title = {$\eps=10^{-2}$}, 	
			title style={align=left, font=\normalsize},
			width = 0.33 \textwidth, 	
			height = 5cm, 		
			scale only axis, 
			xtick = {0,5,10,15,20},
			ytickten={-3,-2,-1,0},
			ylabel near ticks, yticklabel pos=right, 
			ymin = 0.0005,
			ymax = 1, 
			xlabel={time}, 
			ylabel={$ \norm{\psi - \psi_{\textrm{ref} \ } }_{L^2} $}, 
			ylabel style={font=\footnotesize,anchor=near ticklabel}, 
			grid=major,
			legend columns = -1, 
			legend style={draw = none}, 
	 		legend to name= LegendQMCMCeps001, 
			legend entries={$M=2048\quad$,$M=8192\quad$,$M=32768\quad$}] 
			\addplot[very thick,myred,dotted] table[x index=0,y index=2]{plotdata/errors_HK_vs_Fourier_d=2_epsilon=0.01_Nx=256.dat}; 
			\addplot[very thick,myred,dashed] table[x index=0,y index=4]{plotdata/errors_HK_vs_Fourier_d=2_epsilon=0.01_Nx=256.dat}; 
			\addplot[very thick,myred] table[x index=0,y index=6]{plotdata/errors_HK_vs_Fourier_d=2_epsilon=0.01_Nx=256.dat}; 
	 	 	 \end{semilogyaxis} 
 	 	 \end{tikzpicture} 
\ref*{LegendQMCMCeps001}
\caption{Propagation of the error between the Herman--Kluk and the reference solution for the torsional potential in the $\L^2$ norm. On the left hand side the semi-classical parameter is chosen to be $\eps = 0.1$, on the right hand side $\eps = 0.01$. } 
\label{FourierHKQ_d=2_potential=torsion_epsilon=0.01_MultipleMs_Nx=256__errors} 
\end{centering}
\end{figure}
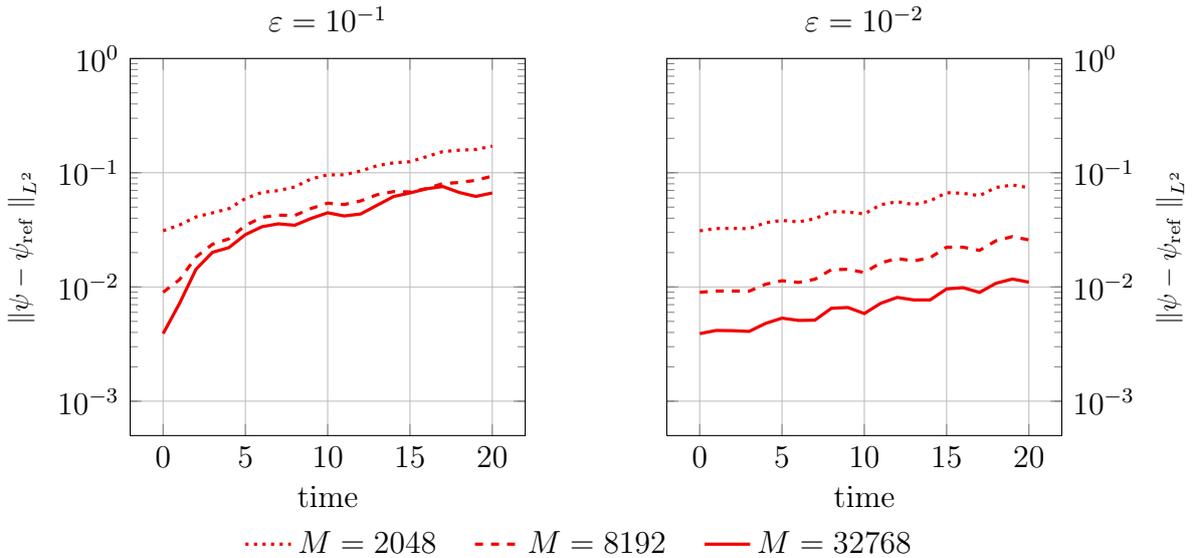 

\end{example}


\subsection{Dependence on the time step size}
As predicted by our main result, Theorem \ref{Thm:ConvergenceOfMethod}, the time discretization error of our method should behave as 
\bala
\norm{\tilde{\mathcal I^\eps_\tau} - U^\eps_\tau} \le C \left(\tau^\gamma/\eps + \tau^\gamma + \eps\right).
\eala
To underline this result by numerical calculations let us consider the same initial wave function and potential as in Example \ref{section:ExpTorsionalPotentialTwoDim}. We want to observe the behaviour for different length of time steps while the number of quasi-Monte Carlo points in phase space $M = 8192$ remains fixed. We will do so for two different values of the semi-classical parameter, namely $\eps = 10^{-1}$ and $\eps = 10^{-2}$ to show that the overall error is dominated by $\eps$ if the length of a time step becomes sufficiently small. We will use the classical St\o rmer-Verlet scheme as time integrator, i.e. $\gamma = 2$, as well as a composition method of order $\gamma = 4$. Figure \ref{Fourier_vs_HKQ_d=2_potential=torsion_M=8192_Nx=256_multiple_dts} shows the behaviour of the error 
\bala
\norm{\psi(T) - \psi_{\textrm{ref}}(T)}
\eala
between the HK solution and the reference solution at the final time $T=20$. As expected, the order of the method influences the step size at which the asymptotic 
error of order $\eps$ starts to dominate. 	
	
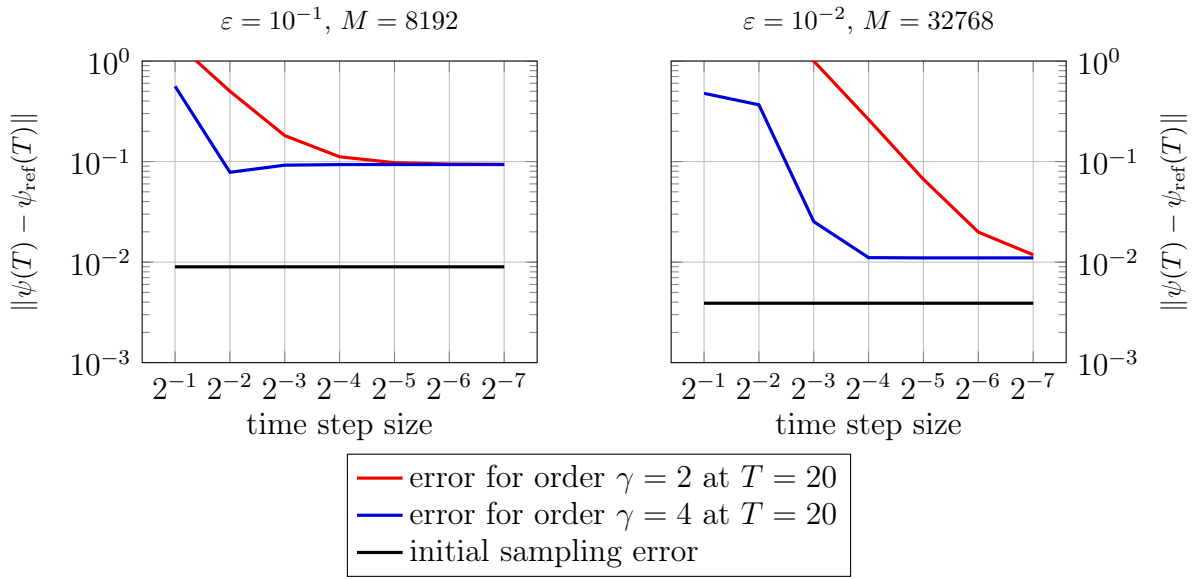
\begin{figure}[!ht] 
\centering
	\begin{tikzpicture} 
		\begin{loglogaxis}[ 
			title = {$\eps=10^{-1}$, $M=8192$}, 	
			title style={align=left, font=\footnotesize},
			scale only axis, 
			height=4cm, 
			width=0.33 \textwidth, 
			xlabel={time step size}, 
			ylabel={$ \norm{\psi(T) - \psi_{\textrm{ref}}(T)} $}, 
			ylabel style={font=\footnotesize,anchor=near ticklabel}, 
			grid=major, 
 		log basis x={2},
			x dir=reverse,
			ymax = 1, 
			ymin=0.001,
			legend columns = 1, 
			legend to name= LegendAry2, 
			legend entries={error for order $\gamma = 2$ at $T=20$, error for order $\gamma = 4$ at $T=20$, initial sampling error}, 
			legend style={cells={anchor=west}, 
			legend pos=south east}]
			\addplot[very thick,myred] table[x index=0,y index=2]{plotdata/Multiple_Stepsizes_potential=torsion_epsilon=0.1_M=8192_Nx=256_order=2.dat}; 
			\addplot[very thick,myblue] table[x index=0,y index=2]{plotdata/Multiple_Stepsizes_potential=torsion_epsilon=0.1_M=8192_Nx=256_order=4.dat}; 
			\addplot[very thick,black] table[x index=0,y index=1]{plotdata/Multiple_Stepsizes_potential=torsion_epsilon=0.1_M=8192_Nx=256_order=2.dat}; 
		\end{loglogaxis} 
	\end{tikzpicture} 
	\hfill	
	\begin{tikzpicture} 
		\begin{loglogaxis}[ 
			title = {$\eps=10^{-2}$, $M=32768$}, 	
			title style={align=left, font=\footnotesize},
			scale only axis, 
			height=4cm, 
			width=0.33 \textwidth, 
			xlabel={time step size}, 
			ylabel near ticks, yticklabel pos=right, 
			ylabel={$ \norm{\psi(T) - \psi_{\textrm{ref}}(T)} $}, 
			ylabel style={font=\footnotesize,anchor=near ticklabel}, 
			grid=major, 
 		log basis x={2},
			x dir=reverse,
			ymax = 1, 
			ymin=0.001]
			\addplot[very thick,myred] table[x index=0,y index=2]{plotdata/Multiple_Stepsizes_potential=torsion_epsilon=0.01_M=8192_Nx=256_order=2.dat}; 
			\addplot[very thick,myblue] table[x index=0,y index=2]{plotdata/Multiple_Stepsizes_potential=torsion_epsilon=0.01_M=8192_Nx=256_order=4.dat}; 
			\addplot[very thick,black] table[x index=0,y index=1]{plotdata/Multiple_Stepsizes_potential=torsion_epsilon=0.01_M=8192_Nx=256_order=2.dat}; 
		\end{loglogaxis} 
	\end{tikzpicture} 
	\ref*{LegendAry2}
	\caption{Dependence of the error between the Herman--Kluk and the reference solution on the length of one time step for the torsional potential for 
	two values of the semi-classical parameter: $\eps=0.1$ in the upper panel, $\eps=0.01$ in the lower panel.} 
	\label{Fourier_vs_HKQ_d=2_potential=torsion_M=8192_Nx=256_multiple_dts}
\end{figure} 


\subsection{Expectation values} \label{section:ExamplesExpectationValues}
For space dimensions greater than three, the computational effort to produce reference solutions with split-step Fourier or Galerkin methods is enormous. In order to show that our algorithm still produces proper results we will now calculate expectation values for higher dimensions with the Herman--Kluk propagator as described in \S\ref{section:ExpectationValues}. 


\begin{example}[The torsional potential in two dimensions] \label{section:ExpTorsionalPotentialTwoDim}
Let us consider the same setting as in Example \ref{Ex:WaveFunctionTorsionTwoD}, i.e. the torsional potential in two dimensions with Gaussian initial wave function and $\eps = 10^{-2}$. We use $ M = 8192 $ quasi-Monte Carlo points in phase space. The length of a time step is $ \tau = 0.25$ and we observe the system up to a final time $T=20$. Figure \ref{FourierHKQ_d=2_potential=torsion_epsilon=0.01_M=8100_Nx=512} shows the evolution of the energy expectation values and their respective point-wise error at every time step. The black dotted lines are the reference solution calculated by a split-step Fourier method. 
\begin{figure}[!ht] 
\centering
	\begin{tikzpicture} 
		\begin{axis}[ 
		scale only axis, 
		height=4cm, 
		width = 0.33 \textwidth,
		xlabel={time}, 
		ylabel={energy}, 
		grid=major, 
		legend columns = -1, 
		legend to name= LegendAry, 
		legend entries={kinetic energy\,, potential energy\,, total energy\,, reference}, 
		legend style={draw=none}] 
		\addplot[thick,mygreen] table[x index=0,y index=1]{plotdata/FourierHKQ_d=2_potential=torsion_epsilon=0.01_M=8192_Nx=256__energies.dat}; 
		\addplot[thick,myblue] table[x index=0,y index=2]{plotdata/FourierHKQ_d=2_potential=torsion_epsilon=0.01_M=8192_Nx=256__energies.dat}; 
		\addplot[thick,myred] table[x index=0,y index=3]{plotdata/FourierHKQ_d=2_potential=torsion_epsilon=0.01_M=8192_Nx=256__energies.dat}; 
		\addplot[thick,black,dashed] table[x index=0,y index=4]{plotdata/FourierHKQ_d=2_potential=torsion_epsilon=0.01_M=8192_Nx=256__energies.dat}; 
		\addplot[thick,black,dashed] table[x index=0,y index=5]{plotdata/FourierHKQ_d=2_potential=torsion_epsilon=0.01_M=8192_Nx=256__energies.dat}; 
		\addplot[thick,black,dashed] table[x index=0,y index=6]{plotdata/FourierHKQ_d=2_potential=torsion_epsilon=0.01_M=8192_Nx=256__energies.dat}; 
		\end{axis} 
	\end{tikzpicture} 
	\hfill
	\begin{tikzpicture} 
		\begin{semilogyaxis}[ylabel near ticks, yticklabel pos=right, 
		scale only axis, 
		height=4cm, 
		width = 0.33 \textwidth,
		xlabel={time}, 
		ylabel={relative error}, 
		grid=major] 
		\addplot[thick,mygreen] table[x index=0,y index=1]{plotdata/FourierHKQ_d=2_potential=torsion_epsilon=0.01_M=8192_Nx=256__enerrors.dat}; 
		\addplot[thick,myblue] table[x index=0,y index=2]{plotdata/FourierHKQ_d=2_potential=torsion_epsilon=0.01_M=8192_Nx=256__enerrors.dat}; 
		\addplot[thick,myred] table[x index=0,y index=3]{plotdata/FourierHKQ_d=2_potential=torsion_epsilon=0.01_M=8192_Nx=256__enerrors.dat}; 
		\end{semilogyaxis} 
	\end{tikzpicture} 
	\ref*{LegendAry}
	\caption{Evolution of energy expectation values (left) and the error to the reference solution (right) for the two-dimensional torsional potential with $\eps = 10^{-2}$.}
	\label{FourierHKQ_d=2_potential=torsion_epsilon=0.01_M=8100_Nx=512} 
\end{figure}
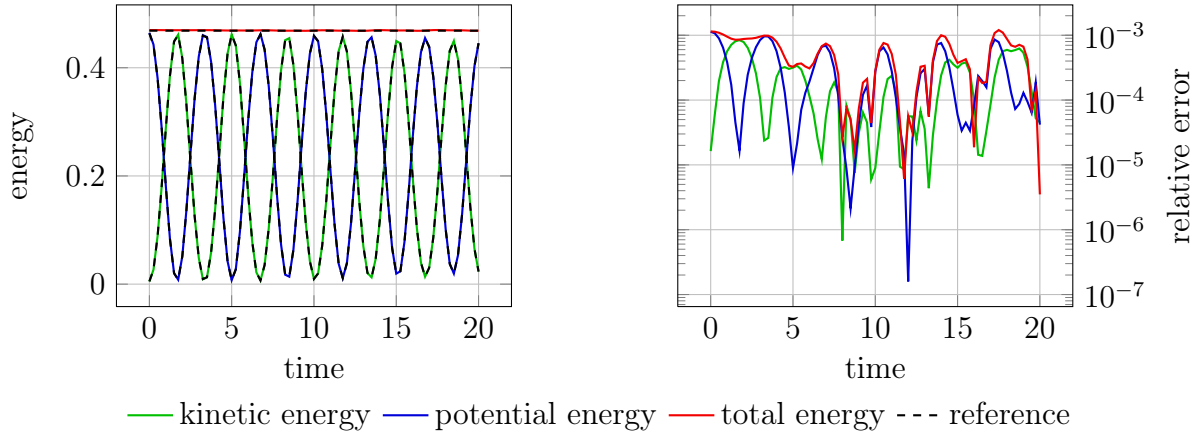

\end{example}

We conclude this section by illustrating the ability to calculate expectation values in high dimensions. 

\begin{example}[The Henon--Heiles potential] 
The Henon--Heiles potential is given by 
\bala
V(x) = \sum_{k=1}^d \frac 1 2 x_k^2 + \sum_{k=1}^{d-1} \sigma \brac{x_k x_{k+1}^2 - \frac 1 3 x_k^3} + \sum_{k=1}^{d-1} \frac{\sigma^2}{16} \brac{x_k^2 + x_{k+1}^2}^2. 
\eala
Let us consider the quantum mechanical position space to be six-dimensional which leads to a twelve-dimensional phase space. The same problem is treated in \cite[Section 5.4]{Faou2009} and \cite[Section 6]{Lasser2010} so that we may compare the results. This means that we choose the semi-classical parameter to be $\eps=10^{-2}$, the coupling constant $\sigma= 1 / \sqrt{80}$, and the initial datum as a Gaussian wave packet centred at $x_0 = (2, \dots, 2)^T$ and $\xi_0 = (0, \dots, 0)^T$. 
We use a time step size of $\tau = 0.01$. At every twentieth time step we calculate the kinetic, potential, and total energy, as well as the $L^2$ norm of our approximate solution by the method described in \S\ref{section:ExpectationValues} using $M = 4 \cdot 1024 \cdot 1024 = 2^{22}$ Halton points as quadrature nodes. Figure \ref{fig:henon6devolution} shows the evolution of kinetic, potential, and total energy. The required computation time is approximately $28$ minutes. 
As mentioned in \S\ref{sec:quasi}, expectation values can also be computed by quasiclassical approximations. We use the algorithm described in \cite{Lasser2010} as a reference solver for validating our method. Figure \ref{fig:henon6derror} shows the respective errors. 
\begin{figure}[!ht]
\begin{subfigure}[b]{\textwidth}
		\begin{tikzpicture}
			\begin{axis}[width = .85 \textwidth, 
				height = 5cm, 		
				scale only axis, 
				ytick={0,5,10,15},
				ymin = -4,
				ymax = 16.5, 
				xmin =0, 
				xmax = 20,
				xlabel={time}, 
				ylabel={energy},
				grid=major,
				legend columns = -1, 
				legend entries={kinetic, potential, total,reference},
				legend style={cells={anchor=west},
				legend pos= south west}]
				\addplot[very thick,mygreen] table[x index=0,y index=1]{plotdata/henon_6d_hk_vs_eg.dat};
				\addplot[very thick,myblue] table[x index=0,y index=2]{plotdata/henon_6d_hk_vs_eg.dat}; 
				\addplot[very thick,myred] table[x index=0,y index=3]{plotdata/henon_6d_hk_vs_eg.dat};
				\addplot[thick,black,dotted] table[x index=0,y index=5]{plotdata/henon_6d_hk_vs_eg.dat};
				\addplot[thick,black,dotted] table[x index=0,y index=6]{plotdata/henon_6d_hk_vs_eg.dat}; 
				\addplot[thick,black,dotted] table[x index=0,y index=7]{plotdata/henon_6d_hk_vs_eg.dat};
			\end{axis}
		\end{tikzpicture}
	\caption{evolution of energy expectation values} \label{fig:henon6devolution}
\end{subfigure} 
\vspace{-.5em}\\ 
\begin{subfigure}[b]{\textwidth}
		\begin{tikzpicture}
			\begin{semilogyaxis}[width = .85 \textwidth, 	
				height = 5cm, 		
				scale only axis, 
				xlabel={time},
				ylabel={error},
				ytickten={-4,-3,-2,-1},
				ymin = 0.00001,
				xmin =0, 
				xmax = 20,
				grid=major,
				legend columns = -1, 
				legend entries={kinetic, potential, total},
				legend style={cells={anchor=west},
				legend pos= south west}]
				\addplot[thick,mygreen] table[x index=0,y index=9]{plotdata/henon_6d_hk_vs_eg.dat};
				\addplot[thick,myblue] table[x index=0,y index=10]{plotdata/henon_6d_hk_vs_eg.dat}; 
				\addplot[thick,myred] table[x index=0,y index=11]{plotdata/henon_6d_hk_vs_eg.dat};
			\end{semilogyaxis}
		\end{tikzpicture}
\caption{deviation from reference} \label{fig:henon6derror}
\end{subfigure}
\caption{Time evolution of the energy expectation values and their deviation from the reference solution for the Henon--Heiles potential in 6d.} \label{fig:henon6d}
\end{figure}
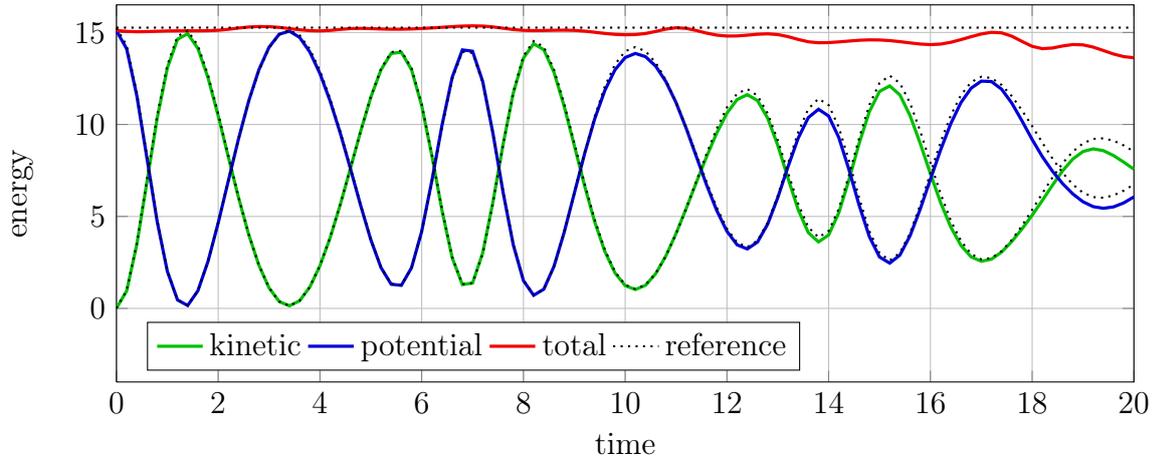
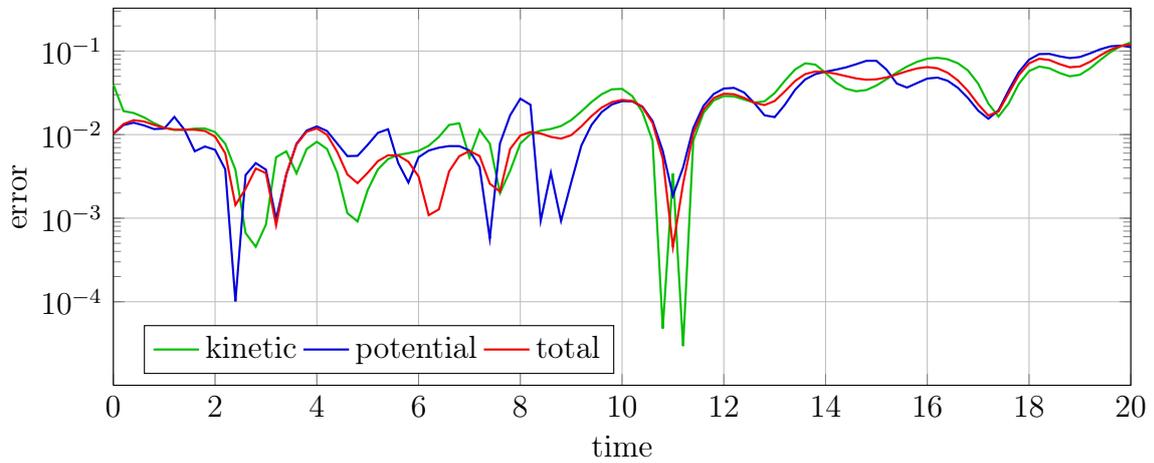
\end{example}


\appendix


\section{A detail for computing the HK factor} \label{appendix:ComputationHKFactor}
We now shall explain a method to calculate the square root that defines the Herman--Kluk factor \eqref{eqn:HKFactor}, i.e. 
\bala
u(t, z) := \sqrt{2^{-d} \det \brac{ \pd_q X^t(z) - \i \pd_p X^t(z) + \i \pd_q \Xi^t(z) + \pd_p \Xi^t(z) } }
\eala
for $t \in [0, T]$ and $z \in \R^{2d}$. We want $u$ to be continuous with respect to $t$ and therefore we need to use a continuous complex square root. In order to do so, let us introduce the notion of a continuous choice of argument for a complex-valued curve. 
\begin{definition*}
Let \( \gamma: [0, T] \to \C \smallsetminus \{0\} \) be a path. A continuous real-valued function \( h:[0, T] \to \R\) is called \emph{continuous choice of argument} along \( \gamma \) if\bala
 \gamma(t) = |\gamma(t)|e^{\ii h(t)}
\eala
holds for all \(t \in [0, T]\). 
\end{definition*}
One can prove that such a continuous choice of the argument exists. Furthermore, any two continuous choices of argument for the same path differ by a constant function and the constant must be an integer multiple of $2\pi$. This makes it possible to define a continuous complex square root by
\bala
\sqrt{\gamma(t)} := \sqrt{|\gamma(t)|} \, \expo{\frac \ii 2 \, h(t)}. 
\eala
The numerical integrator described in \S\ref{sec:TimeDiscretisation} evolves the matrices $\pd_q X^t$, $\pd_p X^t$, $\pd_q \Xi^t$, and $\pd_p \Xi^t$ in time. Additionally we calculate the absolute value and a continuous argument for 
\bala
\det \brac{ \pd_q X^t(z) - \i \pd_p X^t(z) + \i \pd_q \Xi^t(z) + \pd_p \Xi^t(z) }.
\eala 
This allows us to evaluate $u(t, z)$ whenever we need it. It also eliminates additional error sources that may arise from numerically checking the continuity of the square root. 


\section{Formulas for expectation values} \label{appendix:ExpectiationValues}
In \S\ref{section:ExpectationValues} we discuss a way to calculate an approximation to the expectation value of an observable $\Obs$. The process involves evaluating integrals of the form 
\bala
 \int_{\R^d} \overline{\geps{z^{(1)}} (x)} \, \Obs \geps{z^{(2)}} (x) \, dx 
\eala
where $z^{(1)} = (q^{(1)}, p^{(1)}) \in \R^{2d}$ and $z^{(2)} = (q^{(2)}, p^{(2)}) \in \R^{2d}$ are elements of phase space,cf. Equation \eqref{eqn:ApproxExpectationValues}. As mentioned above, there are several cases in which this integral may be computed analytically. Let us give some examples. 

\begin{example} \label{Example:ExpectationValue}
Let us first consider the case $\Obs_1 = \Id$. This means that we have to compute the scalar product of two Gaussian wave packets with the same width parameter but possibly different centres. We obtain
\bala
& \scal{\geps{z^{(1)}}}{\geps{z^{(2)}}} = \int_{\R^d} \overline{\geps{z^{(1)}} (x)} \ \geps{z^{(2)}} (x) \ dx \\ 
& = \brac{\pi \eps}^{-\frac d 2} \int_{\R^d} \exp\brac{ -\frac{1}{2 \eps} \brac{{\abs{x - q^{(1)}}^2} + \abs{x - q^{(2)}}^2}} \dots \\
& \phantom{= \brac{\pi \eps}^{-\frac d 2}} \exp\brac{ - \frac{\i}{\eps} \, p^{(1)} \cdot \brac{x - q^{(1)}} + \frac{\i}{\eps} \, p^{(2)} \cdot \brac{x - q^{(2)}} } dx \\
& = \brac{\pi \eps}^{-\frac d 2} \int_{\R^d} \exp\brac{-\frac 1 \eps \abs{x-\frac 1 2 q^{(1)} + \frac 1 2 q^{(2)} + \frac \i 2 \brac{p^{(1)} - p^{(2)}}}^2} \ dx \ \dots \\
& \phantom{= \brac{\pi \eps}^{-\frac d 2} } \exp \brac{\frac 1 {4 \eps} \abs{q^{(1)} + q^{(2)} + \i \brac{p^{(1)} - p^{(2)}}}^2 - \frac 1 {2 \eps} \brac{\abs{q^{(1)}}^2 + \abs{q^{(2)}}^2}} \dots \\
& \phantom{= \brac{\pi \eps}^{-\frac d 2} } \exp \brac{ \frac \i \eps \brac{p^{(1)} \cdot q^{(1)} - p^{(2)} \cdot q^{(2)}} } \\
& = \exp \brac{-\frac 1 {4 \eps} \abs{z^{(1)}-z^{(2)}}^2 - \frac \i {2 \eps} \brac{p^{(1)} + p^{(2)}} \cdot \brac{q^{(1)} - q^{(2)}} } . 
\eala
\end{example}

\begin{example}[Harmonic oscillator] \label{Example:ExpectationValueHarmonic}
Consider $\Obs_2 = \abs{x}^2 / \, 2$ to be the potential energy of the harmonic oscillator. By partial integration one obtains 
\bala
\scal{\geps{z^{(1)}}}{\Obs_2 \, \geps{z^{(2)}}} & = \int_{\R^d} \overline{\geps{z^{(1)}} (x)} \, \sum_{k=1}^d \frac{x_k^2}{2} \, \geps{z^{(2)}} (x) \, dx \\
& = \sum_{k=1}^d \frac 1 8 \brac{2 \eps - \brac{(p^{(1)}_k - p^{(2)}_k) + \ii \, (q^{(1)}_k + q^{(2)}_k)}^2} \scal{\geps{z^{(1)}}}{\geps{z^{(2)}}} . 
\eala
\end{example}

\begin{example}[Kinetic energy] \label{Example:ExpectationValueKinetic}
Using an $\eps$-scaled version of the Fourier transform allows us to calculate the above integral for the kinetic energy operator $\Obs_3 = -\tfrac{\eps^2}2 \Delta$. The result is 
\bala
\scal{\geps{z^{(1)}}}{\Obs_3 \, \geps{z^{(2)}}} & = \int_{\R^d} \overline{\geps{z^{(1)}} (x)} \brac{- \frac{\eps^2}2 \Delta} \geps{z^{(2)}} (x) \, dx \\
& = \sum_{k=1}^d \frac 1 8 \brac{2 \eps - \brac{(p^{(1)}_k + p^{(2)}_k) + \ii \, (q^{(1)}_k - q^{(2)}_k)}^2} \scal{\geps{z^{(1)}}}{ \geps{z^{(2)}}} . 
\eala
\end{example}

\begin{remark}
Note that, by repeated application of the techniques used in Examples \ref{Example:ExpectationValueHarmonic} and \ref{Example:ExpectationValueKinetic}, analytic expressions may be found for any observable that is a polynomial of position and momentum operator. This includes the Henon--Heiles potential. 
\end{remark}
We need not restrict ourselves to polynomial observables. The above integral may also be calculated analytically for trigonometric potentials. 
\begin{example}[Torsional potential] \label{Example:ExpectationValueTorsion}
Consider $\Obs_4$ to be the torsional potential. In a similar manner as in the previous examples one obtains the expression 
\bala
&\scal{\geps{z^{(1)}}}{\Obs_4 \, \geps{z^{(2)}}} = \int_{\R^d} \overline{\geps{z^{(1)}} (x)} \, \sum_{k=1}^d \brac{1-\cos(x_k)} \, \geps{z^{(2)}} (x) \, dx \\
& = \sum_{k=1}^d \brac{1 -\e^{- \eps / 4} \cosh\brac{\frac 1 2 \, (p^{(1)}_k - p^{(2)}_k) + \frac \ii 2 \, (q^{(1)}_k + q^{(2)}_k)}} \scal{\geps{z^{(1)}}}{ \geps{z^{(2)}}} . 
\eala
\end{example}


\section{Quasi-Monte Carlo quadrature} \label{appendix:QuasiMonteCarlo}

We will now provide the proof of Lemma~\ref{lem:Koksma} by establishing the following result that applies for even and odd dimension.
\begin{lemma}\label{theorem:KH} Let $f\in\SF(\R^d)$ and $\mu$ a probability measure on~$\R^d$ such that $f\in L^1(\d\mu)$. Then, for all $x_1,\ldots,x_M\in\R^d$, 
\bala
\lefteqn{\frac{1}{M} \sum_{m=1}^M f(x_m) - \int_{\R^d} f(x) \d\mu(x)}\\
&
= (-1)^d \int_{\R^d} \pd^{1:d} f(y) \left( \frac{1}{M}\sum_{m=1}^M \chi_{]-\infty,y]}(x_m) -\mu(]-\infty,y])\right) \d y.
\eala
\end{lemma}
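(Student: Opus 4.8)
The plan is to reduce everything to a single pointwise representation of a Schwartz function in terms of its top mixed derivative, after which the identity follows by linearity and Fubini. First I would establish that for every $x\in\R^d$
\bala
f(x) = (-1)^d \int_{\R^d} \chi_{]-\infty,y]}(x)\, \pd^{1:d} f(y) \d y,
\eala
where $\chi_{]-\infty,y]}(x)=1$ precisely when $x_i\le y_i$ for all $i$. This follows from the fundamental theorem of calculus applied once in each coordinate: integrating the relevant first-order partial from $x_k$ to $+\infty$ and using that $f$ together with all its derivatives vanishes at infinity produces a factor $-1$ at each of the $d$ steps, so that $\int_{x_1}^\infty\cdots\int_{x_d}^\infty \pd^{1:d} f(y)\d y = (-1)^d f(x)$. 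Rewriting the integration region $[x,\infty[$ through the identity of indicator values $\chi_{[x,\infty[}(y)=\chi_{]-\infty,y]}(x)$ then gives the displayed formula. The orientation of the inequality obtained here is exactly the one appearing in the discrepancy function of the statement.

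Next I would insert this representation into both terms on the left-hand side. For the empirical average, linearity of the integral immediately yields
\bala
\frac1M\sum_{m=1}^M f(x_m) = (-1)^d \int_{\R^d} \pd^{1:d} f(y)\left(\frac1M\sum_{m=1}^M \chi_{]-\infty,y]}(x_m)\right)\d y.
\eala
For the $\mu$-integral I would substitute the same representation into $\int_{\R^d} f(x)\d\mu(x)$ and interchange the $\d y$- and $\d\mu$-integrations. Since $\pd^{1:d} f\in\SF(\R^d)\subset L^1(\R^d)$ and $\mu$ is a probability measure, the map $(x,y)\mapsto \chi_{]-\infty,y]}(x)\,\pd^{1:d} f(y)$ is absolutely integrable with respect to the product of $\mu$ and Lebesgue measure, with total mass bounded by $\norm{\pd^{1:d} f}_{L^1}$, so Fubini applies and, using $\int_{\R^d}\chi_{]-\infty,y]}(x)\d\mu(x)=\mu(]-\infty,y])$, gives
\bala
\int_{\R^d} f(x)\d\mu(x) = (-1)^d \int_{\R^d} \pd^{1:d} f(y)\,\mu(]-\infty,y])\d y.
\eala
Subtracting the two displays yields the claimed identity.

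The only genuinely delicate points are bookkeeping rather than conceptual: tracking the sign $(-1)^d$ correctly through the $d$ iterated integrations and pairing it with the correct orientation of the indicator, so that $\chi_{]-\infty,y]}$ rather than its reflection appears. I would therefore treat the one-dimensional case explicitly and then iterate by induction on $d$, carrying the partially integrated function $\pd_{k+1}\cdots\pd_d f$ as the induction datum. The Fubini step is the sole analytic input and is unproblematic here, precisely because the top mixed derivative of a Schwartz function is integrable against any probability measure. Finally, specialising to dimension $2d$ in place of $d$ makes $(-1)^{2d}=1$, removing the sign and recovering Lemma~\ref{lem:Koksma} exactly.
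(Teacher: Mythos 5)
Your proposal is correct and follows essentially the same route as the paper's own proof: the pointwise representation $f(x) = (-1)^d \int_{[x,\infty[} \pd^{1:d} f(y) \d y$ obtained by iterating the fundamental theorem of calculus with the Schwartz decay, the indicator identity $\chi_{[x,\infty[}(y)=\chi_{]-\infty,y]}(x)$, linearity for the empirical average, and Fubini for the $\mu$-integral. Your explicit justification of the Fubini step via $\norm{\pd^{1:d} f}_{L^1(\R^d)}$ and the closing remark that dimension $2d$ kills the sign are welcome elaborations, but the argument is the paper's.
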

Our argument adjusts the proof of the Koksma--Hlawka inequality \cite[Theorem~1]{Aistleitner2015} which holds for the integration of functions of bounded variation on the unit cube, to the integration of Schwartz functions on unbounded domains. 
\begin{proof}
For any $x\in\R^d$ we have
\bala
f(x) &= -\int_{x_1}^\infty \pd_1 f(y_1,x_2,\ldots,x_n) \d y_1 \\
&= (-1)^d \int_{x_1}^\infty \cdots \int_{x_d}^\infty \pd^{1:d} f(y_1,\ldots,y_d) \d y_d \cdots \d y_1\\ 
&= (-1)^d \int_{[x,\infty[} \pd^{1:d} f(y) \d y.
\eala
This implies for the arithmetic mean
\bala
\frac1M \sum_{m=1}^M f(x_m) &= 
\frac{(-1)^d}{M} \sum_{m=1}^M \int_{\R^d} \chi_{[x_m,\infty[}(y) \;\pd^{1:d} f(y) \d y\\
&= (-1)^d \int_{\R^d} \pd^{1:d} f(y) \frac1M\sum_{m=1}^M \chi_{]-\infty,y]}(x_m) \d y
\eala
and for the integral
\bala
\int_{\R^d} f(x) \d\mu(x) =(-1)^d \int_{\R^d} \pd^{1:d} f(y) \mu(]-\infty,y]) \d y,
\eala
where the last equation also uses Fubini's theorem. 
\end{proof}


\printbibliography


\end{document}
